\newtheorem{theorem}{Theorem}[section]
\newtheorem{lemma}[theorem]{Lemma}
\newtheorem{proposition}[theorem]{Proposition}
\theoremstyle{definition}
\newtheorem{definition}[theorem]{Definition}
\newtheorem{que}{Question}
\newtheorem{conjecture}{Conjecture}
\theoremstyle{remark}
\newtheorem{remark}[theorem]{Remark}
\newtheorem{thmx}{{\bf Theorem}}
\newtheorem{corx}{{\bf Corollary}}
\numberwithin{equation}{section}
\def\R{\mathbb R}
\def\cal{\mathcal}
\title{On Banyaga's conjecture on  the group of symplectic homeomorphisms }
\author[1]{Carole Madengko \thanks{carolemadengko@gmail.com}}
\author[2]{Stephane Tchuiaga \thanks{tchuiagas@gmail.com}}
\author[3]{Franck Houenou\thanks{rdjeam@gmail.com}}
\affil[1,3]{Institut de Math\'ematiques et de Sciences Physiques,  University of Abomey Calavi, Abomey Calavi, Benin}
\affil[2]{Department of Mathematics, University of Buea, South West Region, Cameroon}
\date{ }
\begin{document}
	\maketitle

	\begin{abstract}
		
	This paper addresses Banyaga's conjecture  asserting  that : the group  of strong symplectic homeomorphisms is a proper normal subgroup of the symplectic homeomorphism group of a closed symplectic manifold.


	\end{abstract}	
	\section{Introduction}
On a closed symplectic manifold $(M,\omega)$, denote by $G_{\omega}(M)$  the identity component in the group $Symp(M,\omega)$ consisted of  symplectic diffeomorphisms. The group $G_{\omega}(M)$  carries a Hofer-like topology introduced  in \cite{ba10a} and  further examined  in \cite{Tc18}. Exploring   the $C^0$  and  Hofer-like  topologies  on $G_{\omega}(M)$,  led the authors in \cite{Ba08, Tch-al} to  define the  group $SG_{\omega}(M)$ of strong symplectic homeomorphisms as a $C^0-$counterpart of $G_{\omega}(M)$. In \cite{Ba10} Banyaga showed  that  $SG_{\omega}(M)$ is a subgroup of $Sympeo_{0}(M,\omega)$  the identity  component in the   group of symplectic homeomorphisms, and conjectured the the following
\begin{conjecture}\label{con}
The inclusion   $SG_{\omega}(M)\subset Sympeo_{0}(M,\omega)$ is proper.	
\end{conjecture}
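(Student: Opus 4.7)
The plan is to establish the properness of the inclusion in two conceptual steps: first reformulate membership in $SG_{\omega}(M)$ as the existence of an approximation by symplectic isotopies whose Hofer-like lengths remain bounded (indeed Cauchy), and then exhibit a concrete symplectic homeomorphism for which every approximating isotopy has Hofer-like length blowing up. The obstruction should be detected by a numerical invariant that is continuous in the Hofer-like topology but discontinuous, or outright undefined, in the pure $C^{0}$-topology.

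The first step I would carry out is a clean reformulation of $SG_{\omega}(M)$: following \cite{Ba08,Tch-al}, a symplectic homeomorphism $h$ lies in $SG_{\omega}(M)$ precisely when there is a symplectic isotopy $\Phi^{n}=(\phi^{n}_{t})$ whose time-$1$ maps converge to $h$ in $C^{0}$ and whose Hofer-like lengths $\ell^{HL}(\Phi^{n})$ form a Cauchy sequence in $\R$. This shows that each element of $SG_{\omega}(M)$ inherits a finite Hofer-like ``energy'', or more intrinsically an equivalence class of bounded Cauchy sequences. In particular, any functional $\rho$ on $G_{\omega}(M)$ that is Lipschitz in the Hofer-like metric extends canonically to a bounded functional on $SG_{\omega}(M)$.

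The second step is to design such a functional $\rho$ and to realize an element of $Sympeo_{0}(M,\omega)$ on which $\rho$ cannot be finite. The natural candidates are a Calabi-type homomorphism, a mass-flow homomorphism in the spirit of Fathi, or a Hofer-like Lipschitz quasi-morphism of Entov--Polterovich type restricted to the identity component. One would verify Hofer-like continuity of $\rho$ on $G_{\omega}(M)$, pass to $SG_{\omega}(M)$ by the previous step, and then build a sequence of symplectic isotopies $\Phi^{n}$ whose time-$1$ maps $C^{0}$-converge to a homeomorphism $h_\infty$ but whose Hofer-like lengths diverge, with $\rho(\phi^{n}_{1})\to\infty$. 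A concrete construction can be modelled on iterated small-support rotations or shears of growing symplectic displacement energy, in the spirit of the $C^{0}$-symplectic examples of Buhovsky--Humilière--Seyfaddini; the target manifold can first be taken as $\mathbb{T}^{2}$ or $S^{2}$ and then transferred to a general $(M,\omega)$ by localization in a Darboux chart.

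The main obstacle is producing the invariant $\rho$ with the two simultaneous properties that (i) it is genuinely Hofer-like Lipschitz on the smooth group $G_{\omega}(M)$, and (ii) it blows up on the concrete wild homeomorphism $h_\infty$. Property (i) forces $\rho$ to be sensitive only to the symplectic/Hofer-like structure, while (ii) requires $\rho$ to ``see'' $h_\infty$ through the $C^{0}$-limit, and the two tensions pull in opposite directions. A secondary, fallback route would be to avoid constructing $\rho$ explicitly and argue by a Baire/category argument that $SG_{\omega}(M)$ is of first category in $Sympeo_{0}(M,\omega)$ with respect to a suitable Polish topology; this would only prove properness and would shift the difficulty to showing that $SG_{\omega}(M)$ is an $F_\sigma$ subset, but it bypasses the construction of the invariant.
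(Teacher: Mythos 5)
Your plan is in the same spirit as the paper's surface argument, but as it stands it has two genuine gaps. First, your ``clean reformulation'' of $SG_{\omega}(M)$ is not correct: membership in $SG_{\omega}(M)$ requires the generators $(U_i,\mathcal{H}_i)$ of the approximating isotopies to converge in the Hofer-like metric $D^{1}_{\kappa,\mathcal{S}}$, not merely that the lengths $l^{(1,\infty)}_{\kappa,\mathcal{S}}(\Phi_i)$ form a Cauchy sequence of real numbers. The weaker condition you state essentially characterizes the finite-energy group $FSHomeo(M)$, and the distinction between the two groups is precisely the hard content here (it is the paper's Theorem B, which answers the Humili\`ere--Seyfaddini question by producing, for each $E>0$, an element of $FSHomeo(M)$ whose every smooth approximating sequence stays Hofer-like distance at least $E$ from every element of $G_{\omega}(M)$); your proposal never engages with this, and your fallback Baire-category route would still have to show $SG_{\omega}(M)$ is meager, which is not easier. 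Second, the step you yourself flag as the ``main obstacle'' --- an invariant $\rho$ that is simultaneously Hofer-like Lipschitz on $G_{\omega}(M)$ and visible through $C^{0}$-limits --- is exactly the missing proof, and your candidate list does not resolve it: the Calabi homomorphism and Entov--Polterovich-type quasi-morphisms are not $C^{0}$-continuous (this is why Cristofaro-Gardiner--Humili\`ere--Seyfaddini needed PFH spectral invariants), so property (ii) fails for them as stated. The paper's choice is the flux-type functional $\Delta(\cdot,\alpha)$ of Tchuiaga, which is $C^{0}$-continuous and bounded by a constant times the Hofer-like length, hence bounded on $FSHomeo(M)$ (and a fortiori on $SG_{\omega}(M)$), and which is then blown up on an explicit wild homeomorphism: the shear $\psi^1(\theta_1,\theta_2)=(\theta_1+h(\theta_2),\theta_2)$ on $\mathbb{T}^2$ with $\int_0^{2\pi}h=\infty$, extended to genus $g\geqslant 2$ by gluing punctured tori.

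Related to this, your proposed transfer to a general $(M,\omega)$ ``by localization in a Darboux chart'' cannot work with a flux-type detector: anything supported in a Darboux ball is Hamiltonian, where $\Delta(\cdot,\alpha)$ carries no information, and replacing it by a Calabi-type invariant reintroduces the $C^{0}$-continuity problem. This is why the paper's properness of $FSHomeo\subset Sympeo_{0}$ is proved only for surfaces of genus $g\geqslant 1$ (Theorem A), while the conjecture on a general closed $(M,\omega)$ is obtained by the different mechanism of Theorem B (separating $\mathcal{S}G_{\omega}(M)$ from $FSHomeo(M)$ inside $Sympeo_{0}(M,\omega)$). To turn your outline into a proof you would need to either construct the invariant with both properties (the paper's $\|\cdot\|^{\infty}$ built from $\Delta(\cdot,\alpha)$) together with the explicit divergent shear, or else carry out the quantitative Hofer-like separation argument of Theorem B; neither ingredient is present in the proposal.
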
 
 When $M$ is simply connected, $SG_{\omega}(M)$ coincides with $Hameo(M,\omega)$, the hameomorphism group defined by Oh-M\"uller \cite{Oh-M07}.
 
As will be detailed in Section \ref{pre}  any symplectic isotopy  $\Phi=\{\phi^t\}$ is generated by a pair $(U,\mathcal{H})$, where $U=\{U^t\}$ is a smooth  time dependent function on $M\times [0,1]$ and $\mathcal{H}=\{\mathcal{H}^t\}$ is a smooth family of  $S-$forms or  harmonic forms. Therefore, the Hofer-like length of $\Phi=\{\phi^t\}$  is given by 

\begin{equation}
	l^{(1,\infty)}_{\kappa, \mathcal{S}}(\Phi) 
= \int_{0}^{1}\left ( osc(U^t) + \kappa\| \mathcal{H}^t\| _{L^2}\right )dt,
\end{equation} 
where $osc(\cdot)= \max(\cdot)-\min(\cdot)$,  $\kappa$ is a positive real number and $\|\cdot\|_{L^2}$ is the $L^2$-norm on $H^1(M,\R)$.
The length $l^{(1,\infty)}_{\kappa, \mathcal{S}}$ induces  metrics $D_{\kappa, \mathcal{S}}^{1}$ and $d_{HL}$, on  $Iso(M,\omega)$ the group of symplectic isotopies   and $G_{\omega}(M)$ respectively.
\begin{definition}
	A homeomorphism $\phi$ is called a finite symplectic  energy homeomorphism if there exists a sequence $(\Phi_{i})_{i}=(\{\phi_{i}^t\})_{i}$ of symplectic isotopies generated by $(U_{i},\mathcal{H}_{i})_{i}$ such that $\phi_{i}^{1}\xrightarrow{C^0}\phi$ and 	$l^{(1,\infty)}_{\kappa, \mathcal{S}}(\Phi_{i}) $ is bounded for all $i$.
\end{definition}
Denote  by $FSHomeo(M)$ the finite energy symplectic 
\begin{definition}\cite{Tch-al}\label{S1}
	A continuous  isotopy $\Phi$ of homeomorphisms   of $M$ 
	is called an $\mathcal{S}-$topological isotopy if 
	there exists a pair $(U,\mathcal{H})$ (with $U$  a continuous time dependent function on $M\times [0,1]$ and $\mathcal{H}$  a continuous family of  $S-$forms or  harmonic forms), and  a sequence of symplectic isotopies $\{\Phi_i\}_i$ generated by $((U_i,\mathcal{H}_i))_{i}$
	such that 
	 $\lim\limits_{i\longrightarrow\infty}D^{1}_{\kappa, \mathcal{S}}((U_i,\mathcal{H}_i),(U,\mathcal{H}))=0$ and  $\Phi_i\xrightarrow{C^0}\Phi$. 
\end{definition}

The group  $\mathcal{S}G_\omega(M)$  of  strong symplectic homeomorphisms of  $M$  consists of  time-one maps of  $\mathcal{S}-$topological isotopies.
 

If $M$ is simply connected, $FSHomeo(M)$ coincides with the finite energy hamiltonian homeomorphism group $FHomeo(M)$ defined in \cite{cr2}.  It has been shown in \cite{bu, cr2} that on any  closed simply connected surface $ (\Sigma,\omega)$,  the inclusions 
\begin{eqnarray}\label{in}
	\mathcal{S}G_\omega(\Sigma)\subseteq FSHomeo(\Sigma)\subseteq Sympeo_{0}(\Sigma,\omega),\end{eqnarray} are strict.
To our knowledge it is not known whether the  inclusions (\ref{in}) are still strict on closed surfaces with genus $g>0$ and higher dimensional closed symplectic manifolds.

In \cite{tc21}, Tchuiaga showed that function 
\begin{eqnarray}\label{flx}
	\begin{array}{cccl}
	\Delta(\cdot,\alpha) :& G_{\omega}(M)&\longrightarrow& \R\\
	& \phi^1& \longmapsto &	\Delta(\phi^1,\alpha):= \displaystyle \dfrac{1}{\|\alpha\|_{L^2}}\int_{M}\left (\int_{\gamma} 	(\phi^1)^*\alpha -\alpha\right )\frac{\omega^n}{n!}
\end{array}\nonumber
\end{eqnarray}
is continuous w.r.t the $C^0-$topology, hence admits a $C^0-$extension  say  $\chi(\cdot,\alpha): \overline{G_{\omega}(M)}^{C^0}\to \R$.
Since $\overline{G_{\omega}(M)}=Sympeo_{0}(M, \omega)$ on surfaces, we use the latter extension to proof the following result.  
\begin{thmx}\label{A1}
	On a closed symplectic surface $(\Sigma_{g},\omega)$ of genus $g\geqslant1$,	$FSHomeo(\Sigma_{g})$ is a proper normal subgroup of $Sympeo_{0}(\Sigma_{g},\omega).$
\end{thmx}
Theorem \ref{A1} confirms Conjecture \ref{con}  for surfaces of genus $g>0$, and gives an affirmative answer to the following question posed by Banyaga in \cite{Ba10} 
\begin{que}\label{Q}
	Is $\mathcal{S}G_\omega(M)$ a normal subgroup of  $Sympeo_{0}(M,\omega)$.
\end{que}
 The following result generalises Theorem $1.4-$\cite{bu}.

\begin{thmx}\label{B1}
		For every $E>0$, there exists a continuous path $\{\psi^t\}$ of homeomorphism of $M$ such that 
	\begin{enumerate}
		\item the path $\{\psi^t\}$ is a uniform limit of a sequence of smooth symplectic isotopies $(\Phi_{i})_{i}=(\{\phi^t_{i}\})_{i}$ generated by $(U_{i}, \mathcal{H}_{i})_{i}$ and $	l^{(1,\infty)}_{\kappa, \mathcal{S}}(\Phi_{i})\leqslant E$
		\item for every sequence $(\varphi_{i})_{i}\subset G_{\omega}(M)$, satisfying $\psi^1=\lim\limits_{C^0}\varphi_{i}$, and for every $\varphi\in G_{\omega}(M)$, we have  $ \liminf\limits_{i\longrightarrow\infty}d_{HL}(\varphi_{i},\varphi)\geqslant E$ 
	\end{enumerate}
\end{thmx}
Item  $1$ of Theorem \ref{B1} tells us that $\psi^1\in FSHomeo(M)$.
Any such $\psi^1$  can not be  in $\mathcal{S}G_{\omega}(M)$. Indeed, suppose $\psi\in\mathcal{S}G_{\omega}(M)$, there exists a sequence $(U_{i},\mathcal{H}_{i})_{i}\subseteq \mathfrak{T}(M, \omega, \mathcal{S})$  generating a sequence $(\{\phi_{i}^t\}_{t})_{i}\subseteq Iso(M,\omega)$ with $(U_{i},\mathcal{H}_{i})_{i}\xrightarrow{D^{1}_{\lambda, \mathcal{S}}} (U',\mathcal{H}')$ and $\phi_{i}^1\xrightarrow{C^0}\psi$. Fix $l$ sufficiently large, consider the pair $(U_{l},\mathcal{H}_{l})$ which generates $\phi_{l}^1\in G_{\omega}(M)$. Then for any  $\epsilon>0$, we have 
$\liminf\limits_{i\longrightarrow\infty}d_{HL}(\phi_{i}^1,\phi_{l}^1)\leqslant \epsilon$. This contradicts item $2$ of Theorem \ref{B1} hence $\psi^1\notin \mathcal{S}G_{\omega}(M)$.
\begin{corx}\label{bg}
	$\mathcal{S}G_{\omega}(M)\neq FSHomeo(M)$
\end{corx}



  Theorem \ref{B1} completes the proof of Cojecture \ref{con} on any closed symplectic manifold and gives an affirmative anwser to question the following question posed by Humilière and Seyfaddini:
  \begin{que}\label{que}
  	Are the groups \( Hameo(M, \omega) \) and \( FHomeo(M) \) distinct?
  \end{que}

 This paper is organised as follows: In Section \ref{pre}, we review the construction of norms on $G_{\omega}(M)$ namely,  the Tchuiaga norm $\|\cdot \|^{\infty}$, and  the Hofer-like norm. With the help of the $C^0$ and Hofer-like topologies, we construct a Hofer-like norm on $FSHomeo(M)$. Section \ref{pro} is devoted to prove our results.
 
\section{Preliminaries}\label{pre}
 Let $(M,\omega)$,  be  a closed symplectic surface with genus $g\geqslant1$, and $\mathcal{Z}^1(M)$, the space of closed $1-$forms on $M$. The group $G_{\omega}(M)$ coincides with the time one map of the group of symplectic isotopies $Iso(M,\omega)$. For all  $\alpha\in \mathcal{Z}^1(M)\smallsetminus\{0\}$ and  for any  $\Phi=\{\phi^t\}\in Iso(M,\omega)$, we have 

\begin{equation}\label{ho}
	(\phi^t)^*\alpha -\alpha = d\mathcal{F}^{\alpha}_{\Phi}(t), \qquad \forall t,
\end{equation}
	where $\displaystyle\mathcal{F}^{\alpha}_{\Phi}(t)= \int_{0}^t (\imath_{\dot{\phi^s}}\alpha)\circ \phi^s  ds .$ 	 
Fix $x\in M$, for any $y\in M$, take a curve $\gamma$ from $x$ to $y$. Integrating (\ref{ho}) over $\gamma$ yields
\begin{equation*}
	\int_{\gamma} 	(\phi^t)^*\alpha -\alpha=  \mathcal{F}^{\alpha}_{\Phi}(t)(y)-\mathcal{F}^{\alpha}_{\Phi}(t)(x), \qquad \forall t
\end{equation*}	 
Recently, it has been shown in \cite{tc21} that
	\begin{equation}\label{2.2}
	\Delta(\phi^1,\alpha)_{x}:= \dfrac{1}{\|\alpha\|_{L^2}}\left \langle [\alpha\wedge \omega^{n-1}], \widetilde{S}_{\omega}(\Phi)\right \rangle - \dfrac{Vol(M)}{\|\alpha\|_{L^2}}\mathcal{F}^{\alpha}_{\Phi}(1)(x), 
\end{equation}
 where  $\widetilde{S}_{\omega}	: \widetilde{G_{\omega}(M)}\longrightarrow H^1(\Sigma, \R)$ is the flux homomorphism see \cite{Banyaga78} for more details. From  Proposition $3.8-$\cite{tc21}, if $M$ has boundary,  equation (\ref{2.2})  has an additional term. In \cite{tc21}, Tchuiaga defined a norm on $G_{\omega}(M)$ as follows: 
  $$\|\phi\|^{\infty}=\displaystyle\sup_{\alpha\in \mathcal{B}(1)}\left (\sup_{x\in M}| \widetilde{\Delta}(\phi,\alpha)_{x}|\right ),$$
 for any $\phi\in G_{\omega}(M)$, where $\widetilde{\Delta}(\phi,\alpha)_{x}=\|\alpha\|_{L^2}\Delta(\phi,\alpha)_{x}$ and $\mathcal{B}(1)=\{\alpha\in \mathcal{Z}^1(\Sigma_{g}): \|\alpha\|_{L^2}=1 \}$.

The norm $\|\cdot\|^{\infty}$ extends to the $C^0-$ setting: For any $\phi\in Sympeo(\Sigma_g,\omega)$, 	\begin{eqnarray}\label{norm1}
	\|\phi\|^{\infty} = \sup_{\alpha \in \mathcal{B}(1)} \left( \sup_{x \in M} | \widetilde{\chi}(\phi, \alpha)_{x}| \right),
\end{eqnarray}
where \(\widetilde{\chi}(\phi, \alpha)_{x} = \|\alpha\|_{L^2} \chi(\phi, \alpha)_{x}\) and \(\mathcal{B}(1) = \{\alpha \in \mathcal{Z}^1(\Sigma_{g}) : \|\alpha\|_{L^2} = 1\}\).


  \subsection{Splitting of closed $1-$ forms }
 Let  $B^1(M)$, be the space of exact $1-$forms and $H^1(M,\mathbb{R})$ the  first de Rham cohomology group. Consider  a linear section $\mathcal{S} :   H^1(M,\mathbb{R})\rightarrow \mathcal{Z}^1(M)$  of the natural projection  
 $\mathcal{\pi}: \mathcal{Z}^1(M)\rightarrow  H^1(M,\mathbb{R})$. Any  $\alpha\in \mathcal{Z}^1(M)$ splits as:
 \begin{equation}\label{eq3}
 	\alpha = \mathcal{S}(\mathcal{\pi}(\alpha)) + (\alpha - \mathcal{S}(\mathcal{\pi}(\alpha)))
 \end{equation}
 where $\mathcal{S}(\mathcal{\pi}(\alpha))$ is an $\mathcal{S}$-form and $\alpha-\mathcal{S}(\mathcal{\pi}(\alpha))$ is an  exact form.
 Let $\mathbb H_{\mathcal{S}}(M)$ denote the space of all $\mathcal{S}-$forms. Note that if the linear section $\mathcal{S}$ is trivial then $\mathcal{S}(\mathcal{\pi}(\alpha))$ is a harmonic $1-$form and equation \ref{eq3} is the Hodge decomposition of closed $1-$forms. For more details see \cite{Ba08,Tc18}.
 To  any symplectic isotopy $\Phi=\{\phi^t\}$ corresponds a unique pair $(U,\mathcal{H})$ through the following isomorphisms:
 \begin{eqnarray*}	
 	\begin{array}{cccccc}
 		Iso(M,\omega) &\longrightarrow& Z^1(M)&\xrightarrow{\text{Splitting}}& Z^1(M)&\longrightarrow \mathfrak{T}(M, \omega, \mathcal{S})\\
 		\Phi=\{\phi^t\}_{t}&\longmapsto& \imath_{\dot\phi^t}\omega &\longmapsto& dU^t+\mathcal{H}^t &\longmapsto  (U,\mathcal{H}),
 \end{array}\end{eqnarray*}
 where $U=\{U^t\}_{t}\in C^\infty_{0}(M\times[0,1],\R)$ ( the set of zero mean smooth functions on $M\times[0,1]$), $\mathcal{H}=\{\mathcal{H}^t\}_{t}$ is a  smooth path in $\mathbb H_{\mathcal{S}}(M)$  and  $\mathfrak{T}(M, \omega, \mathcal{S})$ is the set of all generators $(U,\mathcal{H})$ of symplectic isotopies.
 In the sequel, we write $\phi_{(U,\mathcal{H})}$ to mean the isotopy $\Phi$ is generated by $(U,\mathcal{H})$.
 The set $\mathfrak{T}(M, \omega, \mathcal{S})$ forms a  group w.r.t the following rule: 
 
 \begin{equation}\label{Productrule}
 	(U,\mathcal{H})\Join_{\mathcal{S}}(V ,\mathcal{K}) = 	\left ( U + V\circ\phi_{(U,\mathcal{H})}^{-1} 
 	+ \widetilde{\mathcal{F}}^{\mathcal{K}}_{\phi_{(U,\mathcal{H})}^{-1}}(t),\quad \mathcal{H} 
 	+ \mathcal{K} \right ), 
 \end{equation}
 the inverse of $(U,\mathcal{H})$, denoted $\overline{(U,\mathcal{H})}$, is given by 
 \begin{equation}\label{0Productrule}
 	\overline{(U,\mathcal{H})} = \left (- U\circ\phi_{(U,\mathcal{H})} - 
 	\widetilde{\mathcal{F}}^{\mathcal{H}}_{\phi_{(U,\mathcal{H})}}(t), \quad	-\mathcal{H}\right )
 \end{equation}
 where  $ \displaystyle \widetilde{\mathcal{F}}^{\mathcal{H}}_{\phi_{(U,\mathcal{H})}}(t)=\int_0^t \mathcal H^t( \dot\psi^s_{(\mathcal U, \mathcal H)})\circ \psi^s_{(\mathcal U, \mathcal H)} ds -\frac{\displaystyle \int_M\left(\int_0^t \mathcal H^t( \dot\psi^s_{(\mathcal U, \mathcal H)})\circ \psi^s_{(\mathcal U, \mathcal H)} ds\right) \omega}{\displaystyle\int_M\omega}$
 for all $t\in [0,1].$ 
 Given a symplectic isotopy $\Phi=\{\phi^t\}$ generated by $ (U,\mathcal{H}),$ 
 the  $L^{(1,\infty)}-$version of the Hofer-like length of  $\Phi$ is given  by
 \begin{equation}\label{blg2}
 	l^{(1,\infty)}_{\kappa, \mathcal{S}}(\Phi) 
 	= \displaystyle \int_{0}^1\left ( \nu^B(dU^t) + \kappa\| \mathcal{H}^t\| _{L^2}\right )dt,
 \end{equation}
 where $\nu^B$ is any norm on $B^1(M)$, the coefficient  $\kappa$ is a positive real number. Here, we choose $ \nu^B$ to be  the oscillation  norm for more details see \cite{Tc18}.
 The above Hofer-like length gives rise to the Hofer-like energy $e_{HL}$,   which assigns to any $\psi \in G_{\omega}(M)$:
 \begin{eqnarray*}
 	e_{HL}(\psi)=\inf l^{(1,\infty)}_{\kappa, \mathcal{S}}(\Phi) 	
 \end{eqnarray*}
 where the infimum is taken over all symplectic isotopies whose time one map is $\psi$. The $L^{(1,\infty)}-$ Hofer-like norm of $\psi$ is given by 
 \begin{eqnarray*}
 	\|\psi\|_{HL}= \frac{e_{HL}(\psi)+e_{HL}(\psi^{-1})}{2},
 \end{eqnarray*}
 the latter norm induces a right invariant distance $d_{HL}$ on $G_{\omega}(M)$ given  by $$d_{HL}(\phi,\psi)=\|\phi\circ\psi^{-1}\|_{HL} \quad \forall \phi,\psi\in G_{\omega}(M). $$ 
 The length $l^{(1,\infty)}_{\kappa, \mathcal{S}}$ also induces a metric $D^{1}_{\kappa, \mathcal{S}}$ on the $Iso(M,\omega)$ given by:
 $$D^{1}_{\kappa, \mathcal{S}}((U,\mathcal{H}), (V ,\mathcal{K})) := \frac{D_0^{\kappa, \mathcal{S}}((U,\mathcal{H}), (V ,\mathcal{K})) + D_0^{\kappa, \mathcal{S}}(\overline{(U,\mathcal{H})}, \overline{(V ,\mathcal{K})})}{2}, \quad \forall\kappa>0,$$
 where $$D_0^{\kappa, \mathcal{S}}((U,\mathcal{H}), (V ,\mathcal{K})):= \displaystyle \int_{0}^1\left ( osc(U_t- V_t) + \kappa\| \mathcal{H}_t - \mathcal{K}_t\| _{L^2}\right )dt,$$ 
 
 See \cite{Tc18} for more details on the Hofer-like metrics. In this paper, we will assume $\kappa = 1$. 

	\subsection{From smooth to $C^0$}
$C^0-$symplectic topology began with Eliashberg-Gromov rigidity theorem \cite{Elia,gro} which states that  $Symp(M,\omega)$ is $C^0-$closed in $Diff(M)$ the group of diffeomorphisms of $M$. This inspired Oh-M\"uller \cite{Oh-M07} to define  $Sympeo(M,\omega)$, the symplectic homeoeomorphism group as  $$\overline{Symp(M,\omega)}^{C^0}\subseteq Homeo(M),$$
where  $Homeo(M)$ is the group of homeomorphisms of $M$. The $C^0-$topology  on $Homeo(M)$ is the topology induced by the  distance:
$$d_0(f,h) = \max(d_{C^0}(f,h),d_{C^0}(f^{-1},h^{-1})), \quad \forall f,h\in Homeo(M)$$ 
with  $\displaystyle d_{C^0}(f,h) =\sup_{x\in M}d_g (h(x),f(x))$ ).\\ 
On the space of all continuous paths $\lambda:[0,1]\rightarrow Homeo(M)$ such that $\lambda(0) = id_M$,  the $C^0-$topology is  the  topology induced by the distance: 
\begin{eqnarray*}
	\bar{d}(\lambda,\mu) = \max_{t\in [0,1]}d_0(\lambda(t),\mu(t)).
\end{eqnarray*}
The distance $\bar{d}$ is neither left nor right invariant however, it satisfies the following: 

\begin{proposition}\label{pro01} \cite{TM} $ $ 
	Let $\Theta$ be a smooth  isotopy of diffeomorphisms of $M$. Then there exists a constant $C^\Theta>0$ depending on $\Theta$  such that 
	for all paths  $\Phi$, $\Psi$ in $Homeo(M)$, we have 
	$$\bar{d}(\Phi\circ\Theta, \Psi\circ\Theta)\leqslant C^\Theta \bar{d} (\Phi, \Psi).$$
\end{proposition}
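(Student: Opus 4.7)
Unwinding the definition of $\bar d$, it suffices to bound, uniformly in $t\in[0,1]$,
\[
d_0\bigl(\Phi(t)\circ\Theta(t),\,\Psi(t)\circ\Theta(t)\bigr)
=\max\Bigl(d_{C^0}\bigl(\Phi(t)\circ\Theta(t),\Psi(t)\circ\Theta(t)\bigr),\ d_{C^0}\bigl(\Theta(t)^{-1}\circ\Phi(t)^{-1},\Theta(t)^{-1}\circ\Psi(t)^{-1}\bigr)\Bigr)
\]
by a constant multiple of $d_0(\Phi(t),\Psi(t))$, since the first argument uses $(\Phi\circ\Theta)^{-1}=\Theta^{-1}\circ\Phi^{-1}$. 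The plan is therefore to treat the two terms separately.

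For the first term, I would use that each $\Theta(t)$ is a bijection of $M$. Substituting $y=\Theta(t)(x)$,
\[
\sup_{x\in M} d_g\bigl(\Phi(t)(\Theta(t)(x)),\Psi(t)(\Theta(t)(x))\bigr)=\sup_{y\in M} d_g\bigl(\Phi(t)(y),\Psi(t)(y)\bigr)=d_{C^0}(\Phi(t),\Psi(t)),
\]
so the first term is controlled by $d_0(\Phi(t),\Psi(t))$ with constant $1$ — no smoothness of $\Theta$ is used here.

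The second term is where the smoothness of $\Theta$ enters. Because $\Theta$ is a smooth isotopy on the compact manifold $M$ and $t$ ranges over the compact interval $[0,1]$, the differentials $d\Theta(t)^{-1}$ are uniformly bounded on $M\times[0,1]$. A standard argument using length-minimizing geodesics then yields a uniform Lipschitz constant: there exists $L>0$ such that $d_g(\Theta(t)^{-1}(p),\Theta(t)^{-1}(q))\leq L\,d_g(p,q)$ for all $p,q\in M$ and all $t\in[0,1]$. Applying this pointwise with $p=\Phi(t)^{-1}(x)$, $q=\Psi(t)^{-1}(x)$ and taking the sup over $x$ gives
\[
d_{C^0}\bigl(\Theta(t)^{-1}\circ\Phi(t)^{-1},\Theta(t)^{-1}\circ\Psi(t)^{-1}\bigr)\leq L\,d_{C^0}\bigl(\Phi(t)^{-1},\Psi(t)^{-1}\bigr).
\]

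Combining the two bounds with $C^{\Theta}:=\max(1,L)$ and taking the supremum in $t$ yields $\bar d(\Phi\circ\Theta,\Psi\circ\Theta)\leq C^{\Theta}\,\bar d(\Phi,\Psi)$. The only non-routine step is the extraction of the uniform Lipschitz constant $L$ from the compactness of $M\times[0,1]$ and the smoothness of $\Theta$; everything else is a direct substitution. I do not expect any genuine obstacle here — this proposition is essentially a reformulation of the fact that right composition by a uniformly bi-Lipschitz family is Lipschitz for the $C^0$-metric on homeomorphism paths.
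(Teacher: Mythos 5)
Your argument is correct: right-composition by the bijection $\Theta(t)$ leaves the first $C^0$-term unchanged, and the second term is handled by a uniform Lipschitz bound for $(t,y)\mapsto\Theta(t)^{-1}(y)$ coming from compactness of $M\times[0,1]$ and smoothness of the inverse isotopy, giving $C^{\Theta}=\max(1,L)$. Note that the paper does not prove this proposition at all — it is quoted from \cite{TM} — so there is no in-text argument to compare with; your proof is the standard one and fills that gap correctly.
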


Given $\phi\in FSHomeo(M)$, define 
\begin{eqnarray}
	\|\phi\|_{\widetilde{HL}}:= \liminf\limits_{i\longrightarrow\infty} \|\phi_{i}\|_{HL},
\end{eqnarray}
where the infimum is taken over all $(\phi_{i})_{i}\subset G_{\omega}(M)$ such that $\phi_{i}\xrightarrow{C^0}\phi$.
The rule $\|\cdot\|_{\widetilde{HL}}$  defines a  Hofer-like norm on  $FSHomeo(M)$ indeed:
\begin{enumerate}
	\item Positivity: clearly   $\|\phi\|_{\widetilde{HL}}\geqslant0$, for any $\phi\in FSHomeo(M)$
	\item Non-degeneracy: Let $\phi\in FSHomeo(M)$ and $(\phi_{i})_{i}\subset G_{\omega}(M)$ such that $\phi_{i}\xrightarrow{C^0}\phi$.  Assume 
	\begin{eqnarray*}
		0=\|\phi\|_{\widetilde{HL}}= \liminf\limits_{i\longrightarrow\infty} \|\phi_{i}\|_{HL}.
	\end{eqnarray*}
	By definition of $\liminf\limits$, we have for any $\epsilon>0$, there exists $\delta>0$, such that  for all $(\phi_{i})_{i}\subseteq G_{\omega}(M)$, satisfying $d_{0}(\phi_{i},\phi)<\delta$, we have  $\|\phi_{i}\|_{HL}-\epsilon < \|\phi\|_{\widetilde{HL}}$. By assumption, $\|\phi\|_{\widetilde{HL}}=0$, hence $\|\phi_{i}\|_{HL}-\epsilon < 0$, i.e, $0\leqslant\|\phi_{i}\|_{HL}<\epsilon$. Letting  $\epsilon$  tend to zero,  we  get that $\|\phi_{i}\|_{HL}\longrightarrow 0$, as $i\longrightarrow \infty$. Therefore, $\phi_{i}\xrightarrow{C^0}\phi$, and $\|\phi_{i}\|_{HL}\longrightarrow 0$, as $i\longrightarrow \infty$ hence by Theorem $3.3-$\cite{Tc18},we have that $\phi=id$.
	\item Triangle inequality:  let $\phi$, $\psi \in FSHomeo(M)$. By definition, $\phi=\lim\limits_{C^0}\phi_i$,  and $\psi=\lim\limits_{C^0}\psi_i$, where $(\phi_{i})_{i}$ and $(\psi_{i})_{i}$ are sequences in $G_{\omega}(M)$ whose Hofer-like norms are bounded. By triangle inequality of $\|\cdot\|_{HL}$, one has
	\begin{eqnarray}\label{1}
		\|\phi_{i}\circ\psi_{i}\|_{HL}\leqslant \|\phi_{i}\|_{HL}+ \|\psi_{i}\|_{HL}.
	\end{eqnarray}
	Taking the infimum  over all  $\phi_{i}\circ\psi_{i}$ which $C^0$ converges to  $\phi\circ \psi$, on both sides of (\ref{1}) we have 
	\begin{eqnarray*}
		\liminf\limits_{i\longrightarrow\infty} \|\phi_{i}\circ\psi_{i}\|_{HL}\leqslant\liminf\limits_{i\longrightarrow\infty}(\|\phi_{i}\|_{HL}+ \|\psi_{i}\|_{HL})\leqslant \liminf\limits_{i\longrightarrow\infty}\|\phi_{i}\|_{HL}+\liminf\limits_{i\longrightarrow\infty}\|\phi_{i}\|_{HL}
	\end{eqnarray*}
\end{enumerate}
The norm $\|\cdot\|_{\widetilde{HL}}$ induces a right invariant distance on $FSHomeo(M)$ defined as:
\begin{eqnarray*}
	\tilde{d}_{HL}(\psi, \phi)=\liminf\limits_{i\longrightarrow\infty}\|\phi\circ\psi^{-1}\|_{HL}
\end{eqnarray*}



\section{Proof of Results}\label{pro}

\subsection{Proof of Theorem \ref{A1}}

The  proof  Theorem \ref{A1} consists of  constructing a homeomorphism $\phi\in  \overline{G_{\omega}(\Sigma_{g})}$, 
such that  $\|\phi\|^\infty=\infty$.  Then any such  $\phi$ can not be in $FSHomeo(\Sigma_{g})$.

	\begin{lemma}\label{le}
	For any $\phi\in FSHomeo(\Sigma_{g})$, we have that $\|\phi\|^{\infty}$ is bounded.
\end{lemma}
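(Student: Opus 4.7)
The plan is to combine a uniform Hofer-type bound on $\|\cdot\|^{\infty}$ for \emph{smooth} maps with the $C^0$-continuity of $\chi(\cdot,\alpha)$ from \cite{tc21}, and then pass to the limit along the finite-energy approximating sequence.

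Fix $\phi\in FSHomeo(\Sigma_g)$. By definition there exist $E>0$ and symplectic isotopies $\Phi_i=\{\phi_i^t\}$ generated by $(U_i,\mathcal{H}_i)$ with $\phi_i^1\xrightarrow{C^0}\phi$ and $l^{(1,\infty)}_{\kappa,\mathcal{S}}(\Phi_i)\leqslant E$ for every $i$. The first task is to establish a uniform smooth inequality
\[
|\widetilde{\Delta}(\phi_i^1,\alpha)_x|\ \leqslant\ C\, l^{(1,\infty)}_{\kappa,\mathcal{S}}(\Phi_i)\qquad \forall\,\alpha\in\mathcal{B}(1),\ x\in\Sigma_g,
\]
with $C=C(\Sigma_g,\omega)$ independent of $i$, $\alpha$, and $x$. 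Starting from formula (2.2), the flux contribution $\langle [\alpha\wedge\omega^{n-1}],\widetilde S_\omega(\Phi_i)\rangle$ is handled by Cauchy--Schwarz in cohomology: since the flux is represented (after a Calabi-type normalization) by $\int_0^1 \mathcal H_i^t\,dt$, its absolute value is bounded by a constant times $\|\alpha\|_{L^2}\int_0^1\|\mathcal H_i^t\|_{L^2}\,dt$. For the pointwise term $\mathcal F^{\alpha}_{\Phi_i}(1)(x)$ one first replaces it by its zero-mean representative, so that its $C^0$-norm is controlled by its oscillation, and hence by $\int_0^1 \mathrm{osc}(\iota_{\dot\phi_i^s}\alpha)\,ds$. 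The crucial two-dimensional identity
\[
(\iota_X\alpha)\,\omega\ =\ \alpha\wedge(\iota_X\omega),
\]
which holds because $\alpha\wedge\omega=0$ on a surface, rewrites $\iota_{\dot\phi_i^s}\alpha$ as a quantity linear in $dU_i^s+\mathcal H_i^s$, whose oscillation is in turn bounded by $\mathrm{osc}(U_i^s)+\|\mathcal H_i^s\|_{L^2}$ after an integration by parts exploiting $d\alpha=0$.

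With the uniform bound in hand, fix $\alpha\in\mathcal{B}(1)$ and $x\in\Sigma_g$. The $C^0$-continuity of $\chi(\cdot,\alpha)$ proven in \cite{tc21} gives $\widetilde\chi(\phi,\alpha)_x=\lim_i \widetilde\Delta(\phi_i^1,\alpha)_x$, and therefore
\[
|\widetilde\chi(\phi,\alpha)_x|\ \leqslant\ \limsup_i |\widetilde\Delta(\phi_i^1,\alpha)_x|\ \leqslant\ CE.
\]
Taking the supremum over $\alpha\in\mathcal{B}(1)$ and $x\in\Sigma_g$ yields $\|\phi\|^{\infty}\leqslant CE<\infty$, as required.

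The main obstacle is the uniform estimate in the first step. The flux piece is routine, but controlling $\mathcal F^{\alpha}_\Phi(1)$ is delicate: the naive inequality $|\alpha(\dot\phi^s)|\leqslant\|\alpha\|_\infty\|\dot\phi^s\|$ is useless since neither factor is controlled by Hofer-type data. The surface-specific wedge identity is what rescues the argument, trading a pointwise vector-field bound for a bound on the wedge of closed $1$-forms, the latter being controlled by $\mathrm{osc}(U^s)$ and $\|\mathcal H^s\|_{L^2}$.
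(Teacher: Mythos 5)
Your argument is essentially the paper's: the paper proves this lemma by invoking the estimate $\|\phi_i^1\|^{\infty}\leqslant C\, l^{(1,\infty)}_{\kappa,\mathcal{S}}(\Phi_i)$ from Theorem 6.5 of \cite{tc21} (whose proof contains exactly the flux/Cauchy--Schwarz and $\mathcal{F}^{\alpha}_{\Phi}$-oscillation estimates you sketch) and then passes to the $C^0$-limit via the continuity of $\chi(\cdot,\alpha)$, precisely as in your second step. So your proposal is correct and follows the same route, merely re-deriving in outline the cited uniform bound instead of quoting it.
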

\begin{proof}
	The proof of Lemma \ref{le} follows from the proof of Theorem $6.5-$\cite{tc21}
\end{proof}
\begin{lemma}\label{lem3.1}
	There exists a sequence $(\phi_{i})_{i}\subset G_{\omega}(\Sigma_{g})$, such that $\phi_{i}\xrightarrow{C^0}\phi$ and $\|\phi_{i}\|^\infty\longrightarrow\infty$, as $i\longrightarrow\infty$
	
	
\end{lemma}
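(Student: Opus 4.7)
The plan is to construct a $C^0$-Cauchy sequence $(\phi_i) \subset G_\omega(\Sigma_g)$ whose Tchuiaga norms $\|\phi_i\|^\infty$ diverge; the $C^0$-limit $\phi \in \overline{G_\omega(\Sigma_g)}^{C^0} = Sympeo_0(\Sigma_g,\omega)$ will then, combined with Lemma \ref{le}, automatically lie outside $FSHomeo(\Sigma_g)$, which is precisely the step needed for Theorem \ref{A1}.

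Exploiting $g \geq 1$, I would fix a non-separating simple closed curve $\gamma \subset \Sigma_g$ with a symplectic tubular neighborhood $A \cong S^1 \times (-1,1)$ in Darboux coordinates $(\theta, s)$ so that $\omega|_A = d\theta \wedge ds$, together with a dual $\mathcal{S}$-form $\alpha_0$ normalized so that $\|\alpha_0\|_{L^2} = 1$. Inductively I would define $\phi_i = \phi_{i-1} \circ \psi_i$, where $\psi_i \in G_\omega(\Sigma_g)$ is supported in a shrinking subdomain of $A$ and chosen to satisfy (i) $d_{C^0}(\phi_i, \phi_{i-1}) < 2^{-i}$, which forces $(\phi_i)$ to be $C^0$-Cauchy, and (ii) for some test form $\alpha_i \in \mathcal{B}(1)$ and some point $x_i \in A$, the inequality $|\widetilde{\Delta}(\phi_i, \alpha_i)_{x_i}| \geq i$. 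Each $\psi_i$ is realized as the time-one map of a Hamiltonian isotopy $\Psi_i$ whose vector field wraps around $\gamma$ with large winding but whose time-one displacement cancels down to $C^0$-small, while $\alpha_i$ is an $\mathcal{S}$-form adapted from $\alpha_0$ by an exact correction resonating with the Hamiltonian defining $\psi_i$.

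For item (ii) I would invoke the identity (\ref{2.2}) to split $\widetilde{\Delta}(\phi_i, \alpha_i)_{x_i}$ into the cohomological pairing $\langle[\alpha_i \wedge \omega^{n-1}], \widetilde{S}_\omega(\Phi_i)\rangle$ and the pointwise term $-\mathrm{Vol}(\Sigma_g)\,\mathcal{F}^{\alpha_i}_{\Phi_i}(1)(x_i)$; the second summand is designed to dominate and to grow linearly in $i$. Taking the supremum over $\mathcal{B}(1)$ and over $x$ in (\ref{norm1}) then gives $\|\phi_i\|^\infty \geq i$, while the $C^0$-Cauchy property of $(\phi_i)$ supplies the limit $\phi \in Sympeo_0(\Sigma_g, \omega)$, completing the statement.

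The principal obstacle is the tension between small geometric displacement of $\psi_i$ (required for $C^0$-convergence) and large oscillation of its flux primitive $\mathcal{F}^{\alpha_i}_{\Psi_i}(1)$ (required for norm blow-up). The resolution exploits the fact that $\mathcal{F}^{\alpha}_{\Phi}(1)$ is a path integral of $\iota_{X_H}\alpha \circ \phi^t$, which can accumulate non-trivially even when the time-one map is $C^0$-close to identity, provided the Hamiltonian encircles the cohomology cycle $\gamma$ in a controlled resonance with $\alpha_i$. This mechanism is in the spirit of the oscillation constructions of Buhovsky \cite{bu}, adapted here to the Tchuiaga norm $\|\cdot\|^\infty$, and it exploits precisely the failure of $C^0$-upper-semicontinuity of $\|\cdot\|^\infty$, in contrast to the $C^0$-lower-semicontinuity enjoyed by the Hofer-like norm.
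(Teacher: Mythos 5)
There is a genuine gap, and it sits exactly at the point you call the ``principal obstacle.'' The quantity you need to blow up, $\widetilde{\Delta}(\phi,\alpha)_x=\int_M\left(F(y)-F(x)\right)\omega$ with $dF=\phi^{*}\alpha-\alpha$, depends only on the time-one map $\phi$: the primitive $F$ is unique up to an additive constant, which cancels in $F(y)-F(x)$, and in formula (\ref{2.2}) any extra winding of the isotopy changes $\mathcal{F}^{\alpha}_{\Phi}(1)$ by a period of $\alpha$ while changing the flux term by exactly the compensating amount. So a Hamiltonian increment ``whose vector field wraps around $\gamma$ with large winding but whose time-one displacement cancels down to $C^0$-small'' contributes nothing to $\widetilde{\Delta}$; the failure of $C^0$-semicontinuity you invoke cannot be sourced from hidden winding of isotopies. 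Moreover, for an increment $\psi_i$ supported in a shrinking contractible subdomain $D_i$ of the annulus, $\alpha$ has a local primitive $g$ on $D_i$, so $F_{\psi_i}=g\circ\psi_i-g$ there and is constant outside, giving $\mathrm{osc}(F_{\psi_i})\leqslant 2\,\mathrm{osc}_{D_i}(g)\leqslant 2\|\alpha\|_{C^0}\,\mathrm{diam}(D_i)$, which tends to zero for any test form of bounded $C^0$-norm; a genuinely essential sub-annulus does not help either, since a twist across it with large total rotation is never $C^0$-small at the level of the time-one map. The only remaining escape in your sketch --- letting $\alpha_i\in\mathcal{B}(1)$ carry huge exact ``resonating'' corrections with small $L^2$-norm --- proves too much: the same trick applied to a fixed smooth symplectomorphism would already force $\|\phi\|^{\infty}=\infty$, contradicting Lemma \ref{le} and Theorem 6.5 of \cite{tc21}, on which the deduction of Theorem \ref{A1} rests. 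Finally, even granting large per-step gains, you never argue that gains engineered at stages $j<i$ for the forms $\alpha_j$ survive when at stage $i$ you test against the single new form $\alpha_i$, so the inductive scheme does not close.

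The paper's proof uses a different, flux-based engine that avoids all of this: on $\mathbb{T}^2$ it takes the explicit shear $\phi^t(\theta_1,\theta_2)=(\theta_1+th(\theta_2),\theta_2)$ with $\int_0^{2\pi}h\,d\theta_2=\infty$, extends the time-one map to a homeomorphism fixing the circle $\theta_2=0$, and approximates it by smooth truncated shears $\phi_i$ --- non-Hamiltonian maps with nonzero flux $[d\theta_2]$. Against the single fixed form $\alpha=d\theta_1$ the divergence is read off from $\int_{\mathbb{T}^2}h_i\,\omega\longrightarrow\infty$, i.e.\ from the growth of an average (Calabi/flux-type) quantity of the time-one maps themselves; higher genus is then handled by gluing punctured tori and induction on $g$. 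If you want to salvage your scheme, the increments must carry growing shear/flux in their time-one maps concentrated near a shrinking region (in the spirit of \cite{bu}), not $C^0$-small Hamiltonian perturbations with invisible winding.
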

\begin{proof}
	We proceed by induction on $g$
	\begin{itemize}
		\item [Step 1:]
		For $g=1$,  consider the $2$ torus $\mathbb{T}^2=\mathbb{S}^1\times\mathbb{S}^1 $ with coordinate system $(\theta_{1},\theta_{2})$ and  consider the standard  symplectic form $\omega=d\theta_{1}\wedge d\theta_{2}$ where  $\theta_{1}$, $\theta_{2}\in [0, 2\pi] $.  
		 Let $h:(0,2\pi]\to \mathbb{R}$ be a function of $\theta_2$, satisfying the following properties
			\begin{enumerate}
			\item \( h(\theta_2) > 0 \) for all \( \theta_2 \in (0, 2\pi] \),
			\item \( \lim_{\theta_2 \to 0^+} h(\theta_2) = 0 \),
			\item \( \int_0^{2\pi} h(\theta_2) \, d\theta_2 = \infty \).
		\end{enumerate}

		 Consider the isotopy $\{\phi^t\}_t$  on  $\mathbb{T}^2\backslash\{(\theta_{1},0)\}$ given by 
		$$\phi^t(\theta_{1},\theta_{2}):= (\theta_{1}+th(\theta_{2}), \theta_{2}), \qquad \forall t.$$
		Equip  $\mathbb{T}^2$ with the flat riemannian metric $g_{0}$, then the $1-$forms $d\theta_{1}$ and $d\theta_{2}$ are harmonic. Let $f$ be the antiderivative of $h$.
		The vector field generating the isotopy \( \{\phi^t\}_t \) satisfies:
		\[
		\iota_{\dot{\phi}^t} \omega = h d\theta_2.
		\]
		Using the Hodge decomposition, we can write:
		\[
		hd\theta_2 = d(h\theta_2 - \theta_2) + d\theta_2.
		\]
		Thus, the isotopy \( \{\phi^t\}_t \) is generated by the pair:
		$
		(U, \mathcal{H}) = (h\theta_2 - \theta_2, d\theta_2),
		$
		and its flux satisfies:
		$
		\widetilde{S}_\omega(\{\phi^t\}_t) = [d\theta_2] \neq 0.
		$
		The isotopy \( \{\phi^t\}_t \) can be extended to a non-Hamiltonian symplectic flow \( \{\psi^t\}_t \) on all of \( \mathbb{T}^2 \) by fixing the points \((\theta_1, 0)\) i.e, 
		\[
		\psi^t(\theta_1, \theta_2) =
		\begin{cases}
			\phi^t(\theta_1, \theta_2), & \text{if } \theta_2 > 0, \\
			(\theta_1, 0), & \text{if } \theta_2 = 0.
		\end{cases}
		\]
		
		The time-1 map \(\psi^1\) belongs to the group of symplectic homeomorphisms, \(\text{Sympeo}_0(\mathbb{T}^2, \omega)\). To verify this:
		\begin{itemize}
			\item \textbf{Regularity:} $\psi^1$ is differentiable everywhere except at the points $(\theta_{1},0)$  
			\item \textbf{Smooth approximation.} Consider the  sequence $(\phi_i)_i\subseteq G_{\omega}(\mathbb{T}^2)$ defined as follows: for all $i\in \mathbb{N}^*$, let \( h_i: [0, 2\pi] \to \mathbb{R} \) be a smooth function such that:
			\[
			h_i(\theta_2) =
			\begin{cases}
				h(\theta_2), & \text{if } \theta_2 \in \left[\frac{1}{i}, 2\pi\right], \\
				\text{smoothly vanishing}, & \text{if } \theta_2 \in [0, \frac{1}{i}],
			\end{cases}
			\]
			with \( h_i \leq h_{i+1} \). Then for each $i$ set $\phi_{i}(\theta_{1},\theta_{2}) := (\theta_{1}+ h_{i}(\theta_{2}), \theta_{2})$ and derive that    $\phi_{i}\circ(\psi^1)^{-1}\xrightarrow{C^0}id$, as $i\longrightarrow\infty$.  
		\end{itemize}    		
Now, let  $\alpha=d\theta_{1}$, then  $(\phi_{i})^*\alpha -\alpha =dh_{i}$. Pick $x\in\mathbb{T}^2$  (fixed),  and take  a $C^1-$curve  $\gamma$ from $x$  to any point  $y\in \mathbb{T}^2$, we have the following computation:
		
		\begin{eqnarray}
			\int_{\mathbb{T}^2}\left (\int_{\gamma} 	(\phi_{i})^*\alpha -\alpha\right )\omega &=&	\int_{\mathbb{T}^2}\left (\int_{\gamma} dh_{i} \right )\omega\nonumber\\
			&=&\int_{\mathbb{T}^2} \left (h_{i}(y)-h_{i}(x)\right )\omega\label{3.1}
		\end{eqnarray}
		Set $A$, to be the area of $\mathbb{T}^2$.
		Since $x$ is fixed, (\ref{3.1}) becomes 
		\begin{eqnarray}\label{3.2}
			\displaystyle \int_{\mathbb{T}^2}\left (\int_{\gamma} 	(\phi_{i})^*\alpha -\alpha\right )\omega &=& \int_{\mathbb{T}^2} \left (h_{i}(y)\omega- A h_{i}(x)\right )\nonumber\\
			&=&\int_{[0,2\pi]\times[0,2\pi]}h_{i}(y)\omega- A h_{i}(x).
		\end{eqnarray} 
		Taking the limit as $i$ tend to $\infty$ in equation  (\ref{3.2}), we have 
		$\Delta(\phi_{i}, \alpha ) \longrightarrow \infty$ as $i\longrightarrow\infty$ hence $\|\phi_{i}\|^\infty\longrightarrow \infty$ as $i\longrightarrow\infty$, i.e, $\|\psi^1\|^\infty=\infty$.
		\item [Step 2: ] For $g=2$, consider $2$, tori $\mathbb{T}^2_{1}$, $\mathbb{T}^2_{2}$ and flows $\{\phi^t_{1}\}_t$ and $\{\phi^t_{2}\}_{t}$ defined on each torus as in Step $1$. Remove discs  $\mathbb{D}_{1}$ and $\mathbb{D}_{2}$ of radius $R<1$ from  $\mathbb{T}^2_{1}$ and $\mathbb{T}^2_{2}$ respectively.  A $2-$ genus surface $\Sigma_{2}$ is obtained  by gluing  $\mathbb{T}^2_{1}\backslash \mathbb{D}_{1}$, and $\mathbb{T}^2_{2}\backslash \mathbb{D}_{2}$ along the boundaries of $\mathbb{D}_{1}$ and $\mathbb{D}_{2}$.  Define $\varphi\in Sympeo_0(\Sigma_2,\omega)$  as follows:
		\begin{eqnarray}
			\varphi=\left \{\begin{array}{cccccc}
				\phi_{1}^1  & \text{on }& \mathbb{T}^2_{1}\smallsetminus \mathbb{D}_{1}\\
				\phi^1_{2} & \text{on}& \mathbb{T}^2_{2}\smallsetminus \mathbb{D}_{2}
			\end{array}\right .,
		\end{eqnarray}
		such that $\phi^1_{j}= id$ on $\partial\mathbb{D}_{j}$, $j=1,2$.
		Consider a smooth symplectic map $\varphi_{i}$ defined as follows
		\begin{eqnarray}
			\varphi_{i}=\left \{\begin{array}{cccccc}
				\phi_{1,i}^{1}  & \text{on }& \mathbb{T}^2_{1}\smallsetminus \mathbb{D}_{1}\\
				\phi^1_{2,i} & \text{on}& \mathbb{T}^2_{2}\smallsetminus  \mathbb{D}_{2}
			\end{array}\right .,
		\end{eqnarray}
		where $\phi_{j,i}^1$, $j=1,2$ coincide with the map  smooth $\phi_{i}$ constructed in Step $1$ and $\phi_{j,i}^1=id$ on $\partial\mathbb{D}_{j}$. Then $\varphi_{i}\xrightarrow{C^0}\varphi$, as $i\longrightarrow\infty$.
		
		For each $j\in\{1,2\}$ set $\alpha_{j}= d\theta^{j}_{1}$.
		Consider the projection,  $Pr_{j}:\Sigma_{2}\longrightarrow\mathbb{T}^2_{j}\backslash \mathbb{D}_{j}$.
		Let $\alpha\in Z^1(\Sigma_{2})$ be given by 
		$\alpha = Pr_1^*(\alpha_1)+ Pr_2^*(\alpha_2)$.  
		Let  $\gamma$ a $C^1-$curve from $x$ (fixed) to $y$, in $\Sigma_{2}$, then 
		\begin{eqnarray}\label{3.5}
			\widetilde{\Delta}(\varphi_{i},\alpha)=\left \{\begin{array}{cccccc}
				\widetilde{\Delta}(	\phi_{1,i}^1,\alpha)  & \text{on }& \mathbb{T}^2_{1}\smallsetminus \mathbb{D}_{1}\\
				\widetilde{\Delta}(	\phi_{2,i}^1,\alpha) & \text{on}& \mathbb{T}^2_{2}\smallsetminus  \mathbb{D}_{2}
			\end{array}\right .,
		\end{eqnarray} 
		where
		\begin{eqnarray}\label{3.6}
			{\hspace{-3cm}\widetilde{\Delta}(	\phi_{j,i}^1,\alpha)= \langle \widetilde{S}_{\omega}(\phi_{j,i}^t), [\alpha]\rangle + \int_{\partial\mathbb{D}_{j}}\left (\int_{0}^1 \mathcal{F}_{\phi_{j,i}^t}^{\alpha}(t)  (\phi_{j,i}^t)^*(\imath_{\dot\phi^t_{j,i}}\omega)  dt\right )  - Area(\mathbb{T}^2_{j}\smallsetminus \mathbb{D}_{j})  \mathcal{F}_{\phi_{j,i}^t}^{\alpha}(1)(x).}
		\end{eqnarray} 
		See Proposition $3.8-$\cite{Tch21} for details.  Since for each $j$, $\phi_{j,i}= id$ on $\partial\mathbb{D}_{j}$, then 
		$$\displaystyle \int_{\partial\mathbb{D}_{j}}\left (\int_{0}^1 \mathcal{F}_{\phi_{j,i}^t}^{\alpha}(t)  (\phi_{j,i}^t)^*(\imath_{\dot\phi^t_{j,i}}\omega)  dt\right ) =0$$
		
		Using (\ref{3.1}) and (\ref{3.2}), one can show that the right hand side of equation (\ref{3.6})  goes to $\infty$, as $i\longrightarrow\infty$. 
		Therefore,
		taking the limit as $i$ tends to $\infty$ in  (\ref{3.5}) yields  $\widetilde{\Delta}(\varphi_{i},\alpha) \longrightarrow \infty$ as $i\longrightarrow\infty$ and  $\|\varphi\|^\infty=\infty$
		
		\item [Step 3] Assume that on a surface  $\Sigma_{g-1}$ of genus $g-1$, there exists a sequence $(\psi_{i})_{i}\subset G_{\omega}(\Sigma_{g-1})$, such that $\psi_{i}\xrightarrow{C^0}\psi$ and $\|\psi_{i}\|^\infty\longrightarrow\infty$, as $i\longrightarrow\infty$.   
		Remove a disc $\mathbb{D}_{g-1}$ of radius $R<1$ from $\Sigma_{g-1}$ and  consider the punctured torus  $\mathbb{T}^2\smallsetminus \mathbb{D}$ with $\mathbb{D}$ a disc of radius $R<1$. A $g-$genus surface is formed by gluing  $\Sigma_{g-1}\smallsetminus \mathbb{D}_{g-1}$  and $\mathbb{T}^2\smallsetminus\mathbb{D}$ along the boundaries of $\mathbb{D}_{g-1}$ 	and $\mathbb{D}$.  Define $\vartheta\in Sympeo_0(\Sigma_{g},\omega))$  as follows:
		\begin{eqnarray}
			\vartheta=\left \{\begin{array}{cccccc}
				\phi & \text{on }& \mathbb{T}^2\smallsetminus \mathbb{D}_{1}\\
				\psi & \text{on}& \Sigma_{g-1}\smallsetminus\mathbb{D}_{g-1}
			\end{array}\right .,
		\end{eqnarray}
		where  $\phi$ is constructed as in Step $2$
		and $\psi=id$ on $\partial \Sigma_{g-1}\backslash \mathbb{D}_{g-1}$. Following similar arguments from  Step $2$, one constructs a sequence $(\vartheta_{i})_{i}$ of symplectic diffeomorphisms  and a closed $1-$ form $\alpha$ such that 
		$\vartheta_{i}\xrightarrow{C^0}\vartheta$ and $\widetilde{\Delta}(\vartheta_{i},\alpha)$ tends to infinity as $i\longrightarrow \infty$ that is, $\|\vartheta_{i}\|^\infty\longrightarrow\infty$, as $i\longrightarrow\infty$. 
		Therefore, by mathematical induction, on any closed  symplectic surface $\Sigma_{g}$, of genus $g\geqslant1$,  there is a sequence $(\phi_{i})_{i}\subset G_{\omega}(\Sigma_{g})$, such that $\phi_{i}\xrightarrow{C^0}\phi$ and $\|\phi_{i}\|^\infty\longrightarrow\infty$, as $i\longrightarrow\infty$.\end{itemize}\end{proof}
	
	
In order to complete the proof  Theorem \ref{A1}, we   now show that $FSHomeo(M)$  is a normal subgroup  of $Sympeo_0(M)$, for all closed symplectic manifolds $(M,\omega)$. 
	
	\begin{lemma}
		For any closed symplectic surface $(M,\omega)$, $FSHomeo(M)$  is a normal subgroup  of $Sympeo_0(M)$.
	\end{lemma}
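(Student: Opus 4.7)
The heart of the matter is closure under conjugation by elements of $Sympeo_0(M)$, since closure under composition and inverse follows from the group law (\ref{Productrule})--(\ref{0Productrule}): for approximating sequences $\Phi_i,\Psi_i$ of $\phi,\psi\in FSHomeo(M)$ with uniformly bounded Hofer-like lengths, the products $\Phi_i\Join_{\mathcal{S}}\Psi_i$ and inverses $\overline{\Phi_i}$ have bounded lengths (subadditivity and invariance under $\overline{(\cdot)}$ respectively), while their time-one maps converge in $C^0$ to $\phi\circ\psi$ and $\phi^{-1}$ by symmetry of $d_0$.

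For normality, I would fix $\phi\in FSHomeo(M)$ realized by symplectic isotopies $\Phi_i=\{\phi_i^t\}$ generated by $(U_i,\mathcal{H}_i)$ with $l^{(1,\infty)}_{\kappa,\mathcal{S}}(\Phi_i)\leq E$ and $\phi_i^1\xrightarrow{C^0}\phi$, and fix $h\in Sympeo_0(M)$. Since $Sympeo_0(M)=\overline{G_\omega(M)}^{C^0}$ on a surface, write $h=\lim_{C^0}h_k$ with $h_k\in G_\omega(M)$ realized as the time-one map of some smooth symplectic isotopy $H_k=\{h_k^t\}$. The path $\{h_k\phi_i^t h_k^{-1}\}_t$ is then a smooth symplectic isotopy. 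Using $h_k^*\omega=\omega$ and the fact that $h_k$ acts trivially on $H^1(M,\mathbb{R})$, the pullback $(h_k^{-1})^*\mathcal{H}_i^t$ is cohomologous to $\mathcal{H}_i^t$, and by (\ref{ho}) they differ by $df_{k,i,t}$ with $f_{k,i,t}=\mathcal{F}^{\mathcal{H}_i^t}_{H_k^{-1}}(1)$. Thus the conjugated isotopy is generated by $(U_i^t\circ h_k^{-1}+f_{k,i,t},\mathcal{H}_i^t)$, and I would estimate
\[
l^{(1,\infty)}_{\kappa,\mathcal{S}}(h_k\Phi_i h_k^{-1})\leq E+\int_0^1 osc(f_{k,i,t})\,dt\leq E+2C\Bigl(\int_0^1\|\mathcal{H}_i^t\|_{L^2}\,dt\Bigr)\Bigl(\int_0^1\|\dot h_k^{-1,s}\|_\infty\,ds\Bigr),
\]
where $C$ absorbs the equivalence between the sup-norm and the $L^2$-norm on the finite-dimensional space $\mathbb{H}_{\mathcal{S}}(M)$. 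The first bracket is $\leq E/\kappa$ by the length bound on $\Phi_i$.

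To conclude, I would choose the isotopies $H_k$ so that $\int_0^1\|\dot h_k^{-1,s}\|_\infty\,ds$ is bounded uniformly in $k$; then, combining Proposition \ref{pro01} (the $C^0$-continuity of left/right composition by smooth isotopies) with a Cantor diagonal argument, I would extract indices $k(i)$ with $h_{k(i)}\phi_i^1 h_{k(i)}^{-1}\xrightarrow{C^0} h\phi h^{-1}$ whose conjugated isotopies have uniformly bounded Hofer-like length, certifying $h\phi h^{-1}\in FSHomeo(M)$. The main obstacle I anticipate is precisely the uniform control of the speeds $\|\dot h_k^{-1,s}\|_\infty$: since $h_k\to h$ only in the $C^0$-topology, there is no \emph{a priori} control on derivatives, and naive isotopies $H_k$ may have unboundedly large sup-velocities. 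Overcoming this would require either smoothing a continuous $Sympeo_0$-path from $id$ to $h$ with controlled sup-norm velocity, or rerouting the conjugation estimate to depend only on $C^0$-data of $h_k$.
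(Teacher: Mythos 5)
Your strategy is the same as the paper's: approximate $\phi$ by symplectic isotopies with bounded Hofer-like length, approximate the conjugating homeomorphism by smooth symplectomorphisms $h_k\in G_\omega(M)$, conjugate the isotopies, and observe that the harmonic part of the generator is unchanged while the function part becomes $U_i^t\circ h_k^{-1}$ plus a primitive $f_{k,i,t}$ of the exact form $(h_k^{-1})^*\mathcal{H}_i^t-\mathcal{H}_i^t$. The gap is exactly where you flagged it: your bound $\int_0^1 osc(f_{k,i,t})\,dt\leqslant 2C\bigl(\int_0^1\|\mathcal{H}_i^t\|_{L^2}dt\bigr)\bigl(\int_0^1\|\dot h_{k}^{-1,s}\|_\infty ds\bigr)$ carries the sup-velocity of an isotopy from $id$ to $h_k^{-1}$, and this cannot be controlled when $h_k\to h$ only in $C^0$; as it stands the argument does not close. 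Note also that trying to choose $H_k$ with bounded velocity is not really the right handle: on a connected closed manifold the primitive of $(h_k^{-1})^*\mathcal{H}_i^t-\mathcal{H}_i^t$ is unique up to an additive constant, so $osc(f_{k,i,t})$ is independent of the choice of isotopy $H_k$; what is needed is an intrinsic bound on this oscillation, not a clever path.

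The paper supplies precisely that missing estimate by citation: combining equations $(2.10)$ and $(2.25)$ of \cite{Tc18}, it bounds $\int_0^1 osc\bigl(\widetilde{\mathcal{F}}^{\mathcal{K}_i}(t)\bigr)dt\leqslant B\,K(g_0)\int_0^1\|\mathcal{K}_i^t\|_{L^2}\,dt$, with constants depending only on the Riemannian metric $g_0$ (norm equivalence on the finite-dimensional space of $\mathcal{S}$-forms together with a bound on the primitive that does not involve the conjugating map). Since $osc(V_i^t\circ\psi_i)=osc(V_i^t)$ and the $L^2$-norm of the harmonic part is untouched, the conjugated generators are bounded purely in terms of the bounded data $(V_i,\mathcal{K}_i)$, and no derivative of the conjugator enters. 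So your proposal is structurally the same proof, but it is incomplete without this derivative-free control of the correction term (or an equivalent rerouting); everything else you wrote (closure under products and inverses via (\ref{Productrule})--(\ref{0Productrule}), $C^0$-convergence of the conjugated time-one maps via uniform continuity and a diagonal extraction) is routine and consistent with what the paper does implicitly.
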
	
	\begin{proof}
		Let $\varphi\in FSHomeo(M)$ and  $\psi \in Sympeo_0(M)$. By definition, $\psi=\lim\limits_{C^0} \psi_{i}$ with  $(\psi_{i})_i\subseteq G_{\omega}(M)$.  Consider a   smooth sequence  $\left (\psi_{(V_{i}, \mathcal{K}_{i})}\right )_{i}\subseteq Iso(M,\omega)$ whose  sequence of time one maps $(\psi_{i})_{i}$ converge to $\varphi$ in the $C^0-$topology   and  the sequence  $(V_{i},  \mathcal{K}_{i})_{i}$ is bounded w.r.t $D^{1}_{\kappa, \mathcal{S}}$ .  The sequence $(V_{i}\circ\psi_i+ \widetilde{\mathcal{F}}^{\mathcal{K}_{i}}_{\psi_{(V_{i}, \mathcal{K}_{i})}}(t)),\quad \mathcal{K}_{i})\subset\mathfrak{T}(\Sigma_{g}, \omega, \mathcal{S})$ generates the sequence  of isotopies $(\psi^{-1}_{i}\circ\psi_{(V_{i},  \mathcal{K}_{i})}\circ\psi_{i})_{i}$ whose time one maps  $C^0-$converges to $\psi^{-1}\circ\varphi\circ\psi$ and 
		\begin{eqnarray}
			\int_{0}^1\left ( osc\left [V_{i}^t\circ\psi_i+ \widetilde{\mathcal{F}}^{\mathcal{K}_{i}}_{\psi_{(V_{i}, \mathcal{K}_{i})}}(t)\right ]+ \|\mathcal{K}_{i}^t\|_{L^2}\right )dt\leqslant 		\int_{0}^1osc(V_{i}^t\circ\psi_i)dt+ 	\int_{0}^1osc\left (\widetilde{\mathcal{F}}^{\mathcal{K}_{i}}_{\psi_{(V_{i}, \mathcal{K}_{i})}}(t)\right )dt +	\int_{0}^1\|\mathcal{K}_{i}^t\|_{L^2}dt\nonumber
		\end{eqnarray}
		Combining equations $(2.10)$ and $(2.25)$ in \cite{Tc18}, we get that 
		
		\begin{eqnarray}
			\int_{0}^1\left ( osc\left [V_{i}^t\circ\psi_i+ \widetilde{\mathcal{F}}^{\mathcal{K}_{i}}_{\Psi_i}(t)\right ]+ \|\mathcal{K}_{i}^t\|_{L^2}\right )dt\leqslant 	\int_{0}^1\left (osc(V_{i}^t)dt+ B K(g_{0}) \|\mathcal{K}_{i}^t\|_{L^2}\right )dt+ \int_{0}^1\|\mathcal{K}_{i}^t\|_{L^2}dt\nonumber
		\end{eqnarray}
		for some positive constants $B$ and $K(g_{0})$ which depend on a riemannian metric $g_{0}$ on $M$ . Therefore, 
		\begin{eqnarray}\label{eq6}
			\hspace{-1cm}	\int_{0}^1\left ( osc\left [V_{i}^t\circ\psi_i+ \widetilde{\mathcal{F}}^{\mathcal{K}_{i}}_{\psi_{(V_{i}, \mathcal{K}_{i})}}(t)\right ]+ \|\mathcal{K}_{i}^t\|_{L^2}\right )dt&\leqslant& 	\int_{0}^1osc(V_{i}^t)dt+(B K(g_{0})+1)\int_{0}^1 \|\mathcal{K}_{i}^t\|_{L^2} dt\nonumber\\
			&\leqslant& 2(B K(g_{0})+1)\int_{0}^1( osc(V_{i}^t)+\|\mathcal{K}_{i}^t\|_{L^2}) dt.
		\end{eqnarray}
		The right hand side of inequality (\ref{eq6}) is bounded since $(V_{i},\mathcal{K}_{i})$ is bounded w.r.t $D^{1}_{\kappa, \mathcal{S}}$.
	\end{proof}

\subsection{Proof of Theorem \ref{B1}}
We need the following Lemma in order to prove Theorem\ref{bg}
\begin{lemma}\label{lbg}
	Let $\Phi=\{\phi^t\}$ be a symplectic isotopy . For every $E>0$,  such that $l^{(1,\infty)}_{\kappa, \mathcal{S}}(\Phi)<E$, there exist a symplectic isotopy $\Phi'=\{(\phi')^t\}$ such  that 

	\begin{enumerate}
		\item $\bar{d}(\Phi, \Phi')$ can be made arbitrarily small
		\item $l^{(1,\infty)}_{\kappa, \mathcal{S}}(\Phi')<E+\epsilon$,  for some arbitrarily positive $\epsilon$
		\item for any $\alpha\in \mathcal{Z}^1(M))\smallsetminus\{0\}$, and any symplectic isotopy $\Psi=\{\psi^t\}$ there exists a constant $C>0$ such  that\  $\Delta((\phi')^1\circ(\psi^1)^{-1},\alpha)>C E-\epsilon $. 
	\end{enumerate}
\end{lemma}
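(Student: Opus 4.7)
The strategy is to construct $\Phi'$ by composing $\Phi$ with a carefully designed local symplectic isotopy $\Theta$ that is arbitrarily small in both the $C^0$ and Hofer-like senses but carries a prescribed large asymptotic flux contribution of order $E$. The verification of items (1)--(3) will then combine the continuity estimates of Section~\ref{pre} with the cohomological formula (\ref{2.2}) for $\Delta$.

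Concretely, I would fix an auxiliary parameter $\epsilon>0$ and build $\Theta=\{\theta^t\}$ inside a small Darboux region $D\subset M$ as a smooth, compactly supported truncation of the twist-type isotopy exhibited in the proof of Lemma~\ref{lem3.1}. By simultaneously shrinking the spatial support of the twist and tuning its amplitude and time profile, one arranges that $\Theta$ has $C^0$-size below any prescribed threshold, that its generator $(V,\mathcal{K})\in\mathfrak{T}(M,\omega,\mathcal{S})$ satisfies $l^{(1,\infty)}_{\kappa,\mathcal{S}}(\Theta)<\epsilon$, and that the Banyaga flux $\widetilde{S}_{\omega}(\Theta)\in H^1(M,\R)$ is prescribed with direction and magnitude comparable to $E$. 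This decoupling is possible because the Hofer-like length of $\Theta$ is controlled by $\operatorname{osc}(V)$ and $\|\mathcal{K}\|_{L^2}$ (which shrink with support), whereas the flux class is a cohomological quantity that survives localization, as already illustrated by the divergent constructions in Lemma~\ref{lem3.1}.

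Setting $\Phi'=\Phi\cdot\Theta$ via the product rule (\ref{Productrule}), item (1) follows from the continuity of composition in the $C^0$-topology together with Proposition~\ref{pro01}: since $\Phi$ is smooth, composition with $\Phi$ is uniformly continuous, so $\bar{d}(\Phi,\Phi\cdot\Theta)$ is controlled by the modulus of continuity of $\Phi$ applied to $\bar{d}(\Theta,\mathrm{id})$. Item (2) follows from the subadditivity of $l^{(1,\infty)}_{\kappa,\mathcal{S}}$ under $\Join_{\mathcal{S}}$: the $\mathcal{H}$-part of the combined generator is $\mathcal{H}+\mathcal{K}$, while the $U$-part picks up bounded correction terms from $\widetilde{\mathcal{F}}^{\mathcal{K}}$ that can be absorbed into $\epsilon$ by further shrinking $\Theta$. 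For item (3), I would apply the formula (\ref{2.2}) to $(\phi')^1\circ(\psi^1)^{-1}$ and invoke additivity of $\widetilde{S}_\omega$ under composition: the pairing $\langle[\alpha\wedge\omega^{n-1}],\widetilde{S}_\omega(\Theta)\rangle$ contributes a term linear in $E$, while the contributions from $\Phi$, $\Psi$, and the pointwise term $\mathcal{F}^\alpha$ are bounded in absolute value by constants depending only on $\Phi,\Psi,\alpha$; these are absorbed into the constant $C=C(\alpha,\Psi)$ of item (3).

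The main obstacle is the joint calibration of $\Theta$: one must produce a flux contribution of order $E$ while keeping the Hofer-like length below $\epsilon$ and the $C^0$-size arbitrarily small. This relies on the unbounded flux-to-length ratio present in the constructions of Lemma~\ref{lem3.1} and requires a careful scaling argument in which support shrinkage compensates for amplitude growth. A secondary subtlety is that the direction of $\widetilde{S}_\omega(\Theta)$ must be chosen so that its pairing with $[\alpha\wedge\omega^{n-1}]$ is non-zero for the given $\alpha$; this is permitted by the quantifier order in (3), where $C$ is allowed to depend on $\alpha$ and to absorb the finite $\Psi$-dependent shift.
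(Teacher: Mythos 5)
Your construction breaks at its central calibration step. You want $\Theta$ compactly supported in a small Darboux region $D$ with $\widetilde{S}_{\omega}(\Theta)$ of magnitude comparable to $E$ while $l^{(1,\infty)}_{\kappa,\mathcal{S}}(\Theta)<\epsilon$. Both halves of this fail. First, a symplectic isotopy supported in a Darboux ball is automatically Hamiltonian: $\imath_{\dot{\theta}^t}\omega$ is a closed $1$-form compactly supported in a contractible region, and since $H^1_c$ of a ball vanishes it is exact, so $\widetilde{S}_\omega(\Theta)=0$. Localization kills the flux class rather than preserving it; the constructions of Lemma \ref{lem3.1} carry non-zero flux precisely because they wrap around a non-contractible loop and are not supported in a disc, and their divergence comes from the pointwise term $\mathcal{F}^{\alpha}_{\Phi}(1)$ (equivalently the growth of $\int h_i\,\omega$), not from a large flux-to-length ratio. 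Second, even if you supported $\Theta$ in a neighbourhood of a non-contractible loop, the flux is controlled by the length: $\widetilde{S}_\omega(\Theta)=\int_0^1[\mathcal{H}^t]\,dt$, so $\kappa\,\|\widetilde{S}_\omega(\Theta)\|_{L^2}\leqslant \int_0^1 \kappa\|\mathcal{H}^t\|_{L^2}\,dt\leqslant l^{(1,\infty)}_{\kappa,\mathcal{S}}(\Theta)$. There is no ``unbounded flux-to-length ratio'' for the Hofer-like length, because the harmonic ($\mathcal{S}$-) part enters the length through the very norm that measures the flux. Hence a flux contribution of order $E$ is incompatible with $l^{(1,\infty)}_{\kappa,\mathcal{S}}(\Theta)<\epsilon$, and your item (3) mechanism collapses.

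There is also a structural problem with producing a single $\Phi'$ and absorbing the $\Psi$-dependent contributions into $C$: if $\psi^1=(\phi')^1$ then $\Delta((\phi')^1\circ(\psi^1)^{-1},\alpha)=0$, so no bound of the form $CE-\epsilon$ with a constant playing the role required later (the uniform constant of Theorem $6.5$ of \cite{tc21}, used in the proof of Theorem \ref{B1}) can hold for all $\Psi$ from one candidate. The paper's proof is organised exactly to dodge this: it perturbs $\Phi$ by two \emph{Hamiltonian} (hence flux-free) isotopies $\phi_{(K_1,0)}$, $\phi_{(K_2,0)}$ supported in a small disc $D_\epsilon$, with $K_1,K_2$ built from $a(t)=\sup_{D_\epsilon}\mathcal{F}^{\alpha}_{\Phi}(t)$ and $b(t)=\inf_{D_\epsilon}\mathcal{F}^{\alpha}_{\Phi}(t)$ and a profile $h$ with $osc(h)\leqslant \epsilon/(CE+\epsilon)$, so that the two candidates $\Phi_1,\Phi_2$ are $C^0$-close to $\Phi$ and of length at most $E+\epsilon$, while the identity (\ref{qs0}) shows their $\Delta$-values against any fixed $\Psi$ differ by roughly $4CE$; a pigeonhole argument then says that, for each $\Psi$, at least one of the two candidates satisfies item (3). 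The largeness is generated through the $\mathcal{F}^{\alpha}$-term of (\ref{2.2}) with zero flux, which is the opposite of your mechanism, and the two-candidate dichotomy is the ingredient your single-$\Theta$ construction is missing.
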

\begin{proof}
	

Assume $Vol(M)=1$, let $\alpha\in\cal{Z}^1(M)\smallsetminus\{0\}$ and $\Phi=\{\phi^{t}\}$ a symplectic isotopy generated by $(U,\cal{H})$ and  $l^{(1,\infty)}_{\kappa, \mathcal{S}}(\Phi) < E$. By Theorem $6.5-$\cite{tc21}, we have $\|\phi\|^{\infty}\leqslant C l^{(1,\infty)}_{\kappa, \mathcal{S}}(\Phi)$ with  $C=3\max\{C(\omega), \eta, 2\}$.  The Hofer norm of the function  $\displaystyle\mathcal{F}^{\alpha}_{\Phi}(1)$ is given by
\begin{eqnarray}\label{cal}
	osc\left (\mathcal{F}^{\alpha}_{\Phi}(1)\right )&\leqslant& 2\sup_{x\in \Sigma_g}\left |\int_{0}^1(\imath_{\dot{\phi}^t}\alpha)(x) dt\right | \nonumber\\
	&\leqslant&2\sup_{x}\left |\int_{0}^1(\imath_{\dot{\phi}^t}\omega)(X_{\alpha})(x) dt\right |	
\end{eqnarray}
where $X_{\alpha}$ is the symplectic  vector field defined by $\imath_{X_{\alpha}}\omega= \alpha$. From the splitting in (\ref{eq3}), inequality (\ref{cal})  becomes
\begin{eqnarray}\label{cal1}
	osc\left (\mathcal{F}^{\alpha}_{\Phi}(1)\right )&\leqslant& 2\sup_{x}\left |\int_{0}^1(dU^s+\mathcal{H}^t)(X_{\alpha})(x) dt\right |\nonumber\\
	&\leqslant& 2 \left (\sup_{x}\left |\int_{0}^1(dU^t)(X_{\alpha})(x)dt\right |	+\sup_{x}\left |\int_{0}^1(\mathcal{H}^t)(X_{\alpha})(x)dt\right |	\right )
\end{eqnarray}
From  the lines of proof of Proposition $6.1-$\cite{tc21}, we have 
\begin{eqnarray}\label{3.21}
	\sup_{x}\left |\int_{0}^1(dU^t)(X_{\alpha})(x)dt\right |\leqslant 2\max_{t}(osc(U^t)	
\end{eqnarray} 
By Proposition $6.3-$\cite{tc21}, for all $\alpha\in \mathcal{Z}^1(M)$, and a harmonic vector field $X$, one can find a positive constant $C(\alpha)$ such that 
\begin{eqnarray}
	\sup_{x \in M}|\alpha(X)(x)|\leqslant C(\alpha)\|\imath_{X}\omega\|_{L^2}.
\end{eqnarray}
Therefore, 	
\begin{eqnarray}\label{3.22}
	\sup_{x}\left |\int_{0}^1(\mathcal{H}^t)(X_{\alpha})(x)dt\right |&=&\sup_{x}\left |\int_{0}^1\imath_{X_{\mathcal{H}^t}}\omega)(X_{\alpha})(x)dt\right |\nonumber\\
	&=&\sup_{x}\left |\int_{0}^1(\imath_{X_{\alpha}}\omega)(X_{\mathcal{H}^t})(x)dt\right |\nonumber\\
	&\leqslant& \sup_{x,t}|(\imath_{X_{\alpha}}\omega)(X_{\mathcal{H}^t})(x)|\nonumber\\
	&=& C(\alpha)\sup_{t}\|\mathcal{H}^t\|_{L^2}
\end{eqnarray}
Putting (\ref{3.21}) and (\ref{3.22}) in  (\ref{cal1}), we  have that
\begin{eqnarray*}
	osc\left (\mathcal{F}^{\alpha}_{\Phi}(1)\right )\leqslant 2\max\{2,C(\alpha)\}l^{(1,\infty)}_{\mathcal{S}}\left (\Phi\right )< 2\max\{2,\eta\}E\leqslant CE,
\end{eqnarray*}
 where $\eta$ is as in  Lemma $6.4$ item $2$ .
 Let $\epsilon>0$, be arbitrary and $D_{\epsilon}$ a disc   of radius $\epsilon$ in $M$, set $\displaystyle a(t)=\sup_{x\in D_{\epsilon}}\mathcal{F}^{\alpha}_{\Phi}(t)(x)$  and  $ b(t)=\displaystyle\inf_{x\in D_{\epsilon}}\mathcal{F}^{\alpha}_{\Phi}(t)(x)$ for each $t\in[0,1]$. Consider a function $h:M\to\mathbb{R}$ compactly supported in $D_{\epsilon}$ such that $h(x)\leqslant 1$ for all $x\in D_{\epsilon}$ and $osc(h)\leqslant\frac{\epsilon}{CE+\epsilon}$.
Let $\phi_{(K_{1},0)}$ and $\phi_{(K_{2},0)}$ be hamiltonian isotopies  
generated by the functions  $K_1(x,t)= \left (a(t)+\frac{\epsilon-3CE}{2}\right )h(x)$  and $K_2(x,t)= \left ( b(t)-\frac{\epsilon-3CE}{2}\right )h(x)$ respectively. The symplectic isotopies  $\{\phi_{1}^t\}=\Phi_1=:\phi_{(K_{1},0)}\circ\Phi$ and $\{\phi_{2}^t\}=\Phi_{2}=:\phi_{(K_{2},0)}\circ\Phi$  are good candidates for $\Phi'$. Indeed, for any symplectic isotopy $\Psi=\{\psi^t\}_{t}$ and  $\alpha\in \mathcal{Z}^1(M)$, we have
\begin{eqnarray}\label{qs0}
	\Delta(\phi_{1}^1\circ(\psi^1)^{-1},\alpha)_{y}=\Delta(\phi_{2}^1\circ(\psi^1)^{-1},\alpha)_{y}+ \Delta(\phi_{1}^1\circ(\phi_{2}^1)^{-1},\alpha)_{\phi_{2}^1\circ(\psi^1)^{-1}(y)}, 
\end{eqnarray}
for some $y\in M$. But 
\begin{eqnarray}\label{ho11}
	\Delta(\phi_{1}^1\circ(\phi_{2}^1)^{-1},\alpha)_{\phi_{2}^1\circ(\psi^1)^{-1}(y)}&=&\dfrac{1}{\|\alpha\|_{L^2}}\left \langle [\alpha], \widetilde{S}_{\omega}(\Phi_{1}\circ\Phi_{2}^{-1})\right \rangle - \dfrac{Vol(M)}{\|\alpha\|_{L^2}}\mathcal{F}^{\alpha}_{\Phi_{1}\circ\Phi_{2}^{-1}}(1)(\phi_{2}^1\circ(\psi^1)^{-1}(y))\nonumber\\
	&=& - \dfrac{1}{\|\alpha\|_{L^2}}\mathcal{F}^{\alpha}_{\phi_{K_1}\circ\phi_{K_2}^{-1}}(1)(\phi_{2}^1\circ(\psi^1)^{-1}(y))\nonumber\\
	&=&-\dfrac{1}{\|\alpha\|_{L^2}}\int_{0}^1 \alpha(\dot{\overbrace{\phi^t_{K_{1}}\circ(\phi^t_{K_{2}})^{-1}}})\circ \phi^t_{K_{1}}\circ(\phi^t_{K_{2}})^{-1}(y)dt\nonumber\\
	&=&\dfrac{1}{\|\alpha\|_{L^2}}\int_{0}^1 \left (\imath_{\dot{\overbrace{\phi^t_{K_{1}}\circ(\phi^t_{K_{2}})^{-1}}}}\omega\right )(X_{\alpha})\circ \phi^t_{K_{1}}\circ(\phi^t_{K_{2}})^{-1}(y)dt\nonumber\\
	&=&\dfrac{1}{\|\alpha\|_{L^2}}\int_{0}^1 d(K_{1}\sharp\bar{K}_{2})^{t}(X_{\alpha}(z))dt\nonumber\\
	&=&\dfrac{1}{\|\alpha\|_{L^2}}\int_{0}^1 \frac{d}{ds}((K_{1}\sharp\overline{K}_{2})^{t}(\phi^s_{\alpha}(z))dt
\end{eqnarray}
Normalizing $\alpha$ such that $\|\alpha\|_{L^2}=1$ and integrating  (\ref{ho11}) with respect to $s$, we have
\begin{eqnarray}\label{ho2}
	\Delta(\phi_{1}^1\circ(\phi_{2}^1)^{-1},\alpha)_{\phi_{2}^1\circ(\psi^1)^{-1}(y)}&=&\int_{0}^1\int_{0}^1 \frac{d}{ds}(K_{1}\sharp\bar{K}_{2})^{t}(\phi^s_{\alpha}(z))dsdt\nonumber\\
	&=&	\int_{0}^1\left [ (K_{1}\sharp\bar{K}_{2})^{t}(\phi^1_{\alpha}(z))- (K_{1}\sharp\bar{K}_{2})^{t}(z)\right ]dt
\end{eqnarray}
Since the function $K_{1}\sharp\bar{K}_{2}$ is compactly supported in $D_{\epsilon}$,
we have
\begin{eqnarray*}
	(K_{1} \sharp \bar{K}_{2})^{t}(\phi^1_{\alpha}(z)) - (K_{1} \sharp \bar{K}_{2})^{t}(z) = 
	\begin{cases}
		(K_{1} \sharp \bar{K}_{2})^{t}(p), & \text{if } z \notin D_{\epsilon} \text{ and } \phi^1_{\alpha}(z) = p \in D_{\epsilon}, \\
		0, & \text{if } z \notin D_{\epsilon} \text{ and } \phi^1_{\alpha}(z) = p \notin D_{\epsilon}, \\
		-(K_{1} \sharp \bar{K}_{2})^{t}(z), & \text{if } z \in D_{\epsilon} \text{ and } \phi^1_{\alpha}(z) = p \notin D_{\epsilon}, \\
		(K_{1} \sharp \bar{K}_{2})^{t}(p) - (K_{1} \sharp \bar{K}_{2})^{t}(z), & \text{if } z \in D_{\epsilon} \text{ and } \phi^1_{\alpha}(z) = p \in D_{\epsilon}.
	\end{cases}
\end{eqnarray*}
For each $z\in D_{\epsilon}$, we have 
\begin{eqnarray}\label{ho3}
	\int_{0}^1 (K_{1}\sharp\bar{K}_{2})_{t}(z)	dt	&=&\int_{0}^1[K_{1}^t-K_{2}^t\circ\phi^t_{K_{2}}\circ(\phi^t_{K_{1}})^{-1}](z)dt\nonumber\\
	&=& \int_{0}^1\left (a(t)+\frac{\epsilon-3CE}{2}\right )h(z)dt\nonumber\\
	&&\quad\qquad-\int_{0}^1 \left (b(t)-\frac{\epsilon-3CE}{2}\right )h\left (\phi^t_{K_{2}}\circ(\phi^t_{K_{1}})^{-1}(z)\right )dt\nonumber\\
	&\leqslant& \int_{0}^1(a(t)-b(t)+\epsilon-3CE)\left [h(p)+h\left (\phi^t_{K_{2}}\circ(\phi^t_{K_{1}})^{-1}(z)\right )\right ]dt\nonumber\\
	&\leqslant& \int_{0}^1 2(a(t)-b(t)+\epsilon-3CE) \quad \text{since $h(x)\leqslant 1 \  \forall z\in D_{\epsilon}$}\nonumber\\
	&\leqslant& \int_{0}^1 2(CE+\epsilon-3CE) dt \quad \text{by definition of $a(t)$ and $b(t)$}\nonumber\\
	&=&2\epsilon-4CE.
\end{eqnarray}
Therefore, (\ref{ho2}) becomes
\begin{eqnarray}\label{ho4}
	\Delta(\phi_{1}^1\circ(\phi_{2}^1)^{-1},\alpha)_{\phi_{2}^1\circ(\psi^1)^{-1}(y)}
	&\leqslant&2\epsilon-4CE.
\end{eqnarray}
Putting (\ref{ho4}) in (\ref{qs0})  we have
\begin{eqnarray}\label{qs1}
	\Delta(\phi_{1}^1\circ(\psi^1)^{-1},\alpha)_{y}\leqslant 	\Delta(\phi_{2}^1\circ(\psi^1)^{-1},\alpha)_{y}+ 2\epsilon-4CE
\end{eqnarray}
Therefore, either $	\Delta(\phi_{2}^1\circ(\psi^1)^{-1},\alpha)_{y}\geqslant2CE-\epsilon$ or $\Delta(\phi_{1}^1\circ(\psi^1)^{-1},\alpha)_{y}\leqslant\epsilon-2CE$. That is either $ \phi_{2}^1$ or $\phi_{1}^1$ satisfies item $(3)$ . From Proposition \ref{pro01}, we have 
\begin{eqnarray*}
	\bar{d}(\Phi_{1},\Phi)=\bar{d}(\phi_{K_{2}}\circ\Phi, \Phi)\leqslant C^\Phi\bar{d}(\phi_{K_{2}},id),
\end{eqnarray*}
since $K_{2}$ is compactly supported in $D_{\epsilon}$, for an $\epsilon$ small enough, $\bar{d}(\Phi_{1},\Phi)$ can be made as small as we want . 	For item $ 2.$, we have that 
\begin{eqnarray*}
	l^{(1,\infty)}_{ \mathcal{S}}(\Phi_{1})\leqslant l^{(1,\infty)}_{ \mathcal{S}}(\phi_{K_{1}})+l^{(1,\infty)}_{ \mathcal{S}}(\Phi)&\leqslant &\int_{0}^1 \left (a(t)+\frac{\epsilon-3CE}{2}\right)osc(h)dt +E \\
	&\leqslant& \left (osc\left (\mathcal{F}^{\alpha}_{\Phi}(t)(\cdot)\right )+\frac{\epsilon-3CE}{2}\right ) osc(h)+E\\
	&\leqslant&\left (CE+\frac{\epsilon-3CE}{2}\right )osc(h)+E\\
	&\leqslant&\left (\frac{\epsilon-CE}{2}\right )\frac{\epsilon}{CE+\epsilon} +E \ \leqslant\  E+\epsilon.
\end{eqnarray*}
\end{proof}

\begin{remark}$ $
\begin{enumerate}
	\item 	The first de Rham cohomology group $H^1(M,\mathbb{R})$, is a finite dimensional vector space over the separable field $\mathbb{R}$. Hence, $H^1(M,\mathbb{R})$ is separable w.r.t the $L^2-$ norm ($\|\cdot\|_{L^2}$)
	\item $\mathbb{H}_{\mathcal S}(M) $ is separable since it is isomorphic to the separable  space  $H^1(M,\mathbb{R})$. Hence the space of paths  $\mathcal{P}\mathbb{H}_{\mathcal S}(M)$ from $[0,1]$ to $\mathbb{H}_{\mathcal S}(M)$ is separable w.r.t the norm $\displaystyle\int_{0}^1\|\cdot\|_{L^2}dt$
	\item The space of normalised functions $C^\infty_{0}(M\times[0,1],\R)$  is separable when endowed with the $L^{(1,\infty)}-$norm. Therefore, $\mathfrak{T}(M,\omega,\mathcal{S})= C^\infty_{0}(M\times[0,1],\R)\times \mathcal{P}\mathbb{H}_{\mathcal S}(M)$ is separable w.r.t \ $l^{(1,\infty)}_{\mathcal{S}}$ 
\end{enumerate}
\end{remark}
\begin{proof}[Proof of Theorem \ref{bg}]
Let $E>0$ and $(U,\mathcal{H})\in \mathfrak{T}(M,\omega,\mathcal{S})$ such that 
$D^{1}_{\lambda,\mathcal{S}}((U,\mathcal{H},), (0,0))<E$. 
Consider a dense sequence $(V_{j},\mathcal{K}_{j})_{j}\subseteq \mathfrak{T}(M,\omega,\mathcal{S})$, then the  corresponding sequence of time-one maps $\left (\phi^1_{(V_{j},\mathcal{K}_{j})}\right )_{j}\subseteq G_{\omega}(M)$ is dense w.r.t the Hofer-like topology.	Construct inductively a sequence $(U_0,\mathcal{H}_0), (U_1,\mathcal{H}_1), \cdots $  in $\mathfrak{T}(M,\omega,\mathcal{S})$ : by  setting  $(U_0,\mathcal{H}_0)=(U,\mathcal{H})$,  for each $k\geqslant1$
Lemma \ref{lbg} provides us with  $(U_k,\mathcal{H}_k)\in \mathfrak{T}(M,\omega,\mathcal{S})$ such that
\begin{enumerate}
	\item[(a)] $	l^{(1,\infty)}_{ \mathcal{S}}(\phi_{(U_k,\mathcal{H}_k)})<E$  for each $k$,
	\item[(b)] $\bar{d}\left (\phi_{(U_{k-1},\mathcal{H}_{k-1})},\phi_{(U_{k-1},\mathcal{H}_{k-1})}\right )<\frac{\epsilon}{k}$.
	\item[(c)]   for any $\alpha\in \mathcal{Z}^1(M))\smallsetminus\{0\}$,   $\Delta\left (\phi_{(U_{k},\mathcal{H}_{k})}^1\circ \left (\phi_{(V_{j},\mathcal{K}_{j})}^1\right )^{-1},\alpha\right )>CE-\frac{1}{j}$, for $1\leqslant k \leqslant j$.
\end{enumerate}
Property $(b)$ above implies that $\left (\phi_{(U_{k},\mathcal{H}_{k})}\right )_{k}$,converges uniformly to a continuous path $\{\psi^t\}$ of homeomorphisms of $M$. Property $(a)$ and $(b)$ together implies item $1$ of Theorem \ref{bg}. For item $2$,
let $(\psi_{k})\subseteq G_{\omega}(M)$, such that $\psi_{k}\xrightarrow{C^0}\psi^1$. Given $\delta>0$, there exists infinitely many $j$ for which $d_{HL}\left (\psi^1,\phi_{(V_{j},\mathcal{K}_{j})}^1\right )<\delta$.
For such $j$ and for every $k$, there exists a constant $C$ from Theorem \ref{bd} such that 
\begin{eqnarray}\label{l}
	C d_{HL}(\psi_{k},\psi^1)\geqslant  C d_{HL}\left (\psi_{k},\phi_{(V_{j},\mathcal{K}_{j})}^1\right )-C\delta&\geqslant& \left \|\psi_{k}\circ \left (\phi_{(V_{j},\mathcal{K}_{j})}^1\right )^{-1}\right \|^\infty -C\delta\nonumber\\
	&\geqslant& \left |\Delta\left (\psi_{k}\circ \left (\phi_{(V_{j},\mathcal{K}_{j})}^1\right )^{-1},\alpha\right )\right |-C\delta.\qquad \quad
\end{eqnarray} 
Since $\phi_{(U_{k},\mathcal{H}_{k})}^1\xrightarrow{C^0}\psi^1\xleftarrow{C^0}\psi_{k} $,  and $\Delta(\cdot,\alpha)$ is continuous w.r.t the $C^0$ topology, we have 
\begin{eqnarray}\label{l1}
	\lim\limits_{k\longrightarrow\infty}\left |\Delta\left (\phi_{(U_{k},\mathcal{H}_{k})}^1\circ \left (\phi_{(V_{j},\mathcal{K}_{j})}^1\right )^{-1},\alpha\right )\right |&=&
	\left |\Delta\left (\psi^1\circ \left (\phi_{(V_{j},\mathcal{K}_{j})}^1\right )^{-1},\alpha \right )\right |\nonumber\\
	&=& \lim\limits_{k\rightarrow\infty} \left |\Delta\left (\psi_{k}\circ \left (\phi_{(V_{j},\mathcal{K}_{j})}^1\right )^{-1},\alpha \right )\right |\qquad\qquad.
\end{eqnarray}
Combining (\ref{l})  (\ref{l1}) with property (c) one gets 
\begin{eqnarray*}
	C \liminf\limits_{k\rightarrow\infty}d_{HL}(\psi_{k},\psi^1)&\geqslant &\lim\limits_{k\rightarrow\infty} \left |\Delta\left (\psi_{k}\circ \left (\phi_{(V_{j},\mathcal{K}_{j})}^1\right )^{-1},\alpha \right )\right | -C\delta\\
	&= &\lim\limits_{k\longrightarrow\infty}\left |\Delta\left (\phi_{(U_{k},\mathcal{H}_{k})}^1\circ \left (\phi_{(V_{j},\mathcal{K}_{j})}^1\right )^{-1},\alpha\right )\right | -C\delta	\\
	&\geqslant& \lim\limits_{k\rightarrow\infty} \left (CE-\frac{1}{j}- C\delta\right ).
\end{eqnarray*}
Therefore, $\liminf\limits_{k\rightarrow\infty}d_{HL}(\psi_{k},\psi^1)\geqslant E-\delta$, for every $\delta>0$.
\end{proof}

\begin{center}
	{\bf Acknowledgments :}
\end{center}

The first author  acknowledges support of the  CEA-SMIA (IDA N° 6509-BJ et D5320), the Institut Henri Poincaré (UAR 839 CNRS-Sorbonne Université), and LabEx CARMIN (ANR-10-LABX-59-01).

\end{document}